\documentclass[11pt]{amsart}
\textheight=615pt
\textwidth=360pt
\usepackage{amssymb,euscript,tikz,units}
\usepackage[colorlinks,citecolor=blue,linkcolor=red]{hyperref}
\usepackage{verbatim}
\usepackage[nameinlink]{cleveref}
\usepackage{colonequals}
\usepackage{tikz}

\newcommand{\sM}{\mathcal{M}}
\newcommand{\sA}{\mathcal{A}}
\newcommand{\sB}{\mathcal{B}}
\newcommand{\sL}{\mathcal{L}}
\newcommand{\R}{\mathbb{R}}
\newcommand{\Z}{\mathbb{Z}}
\renewcommand{\H}{\mathbb{H}}
\newcommand{\abs}[1]{\lvert #1 \rvert}

\newcommand{\eps}{\varepsilon}

\newcommand{\vph}{\varphi}
\newcommand{\sbs}{\subset}

\def\length{\mathop{\rm length}}
\def\sys{\mathop{\rm sys}}

\def\arcsinh{\mathop{\rm arcsinh}}

\def\Vol{\mathop{\rm Vol}}
\def\dist{\mathop{\rm dist}}

\def\inj{\mathop{\rm inj}}
\def\Osc{\mathop{\rm Osc}}

\theoremstyle{plain}
\newtheorem{theorem}{Theorem}

\newtheorem{proposition}[theorem]{Proposition}
\newtheorem{lemma}[theorem]{Lemma}

\newtheorem{remark}[theorem]{Remark}

\newcommand{\be}{\begin{equation}}
\newcommand{\ene}{\end{equation}}
\newcommand{\br}{\begin{remark}}
\newcommand{\er}{\end{remark}}
\newcommand{\bl}{\begin{lem}}
\newcommand{\el}{\end{lem}}
\newcommand{\bcor}{\begin{cor}}
\newcommand{\ecor}{\end{cor}}
\newcommand{\bpro}{\begin{pro}}
\newcommand{\epro}{\end{pro}}
\newcommand{\ben}{\begin{enumerate}}
\newcommand{\een}{\end{enumerate}}
\newcommand{\bp}{\begin{proof}}
\newcommand{\ep}{\end{proof}}
\newcommand{\bpo}{\begin{pro}}
\newcommand{\epo}{\end{pro}}
\newcommand{\beq}{\begin{equation*}}
\newcommand{\eeq}{\end{equation*}}
\newcommand{\bear}{\begin{eqnarray}}
\newcommand{\eear}{\end{eqnarray}}
\newcommand{\beqar}{\begin{eqnarray*}}
\newcommand{\eeqar}{\end{eqnarray*}}
\newcommand{\bt}{\begin{theorem}}
\newcommand{\et}{\end{theorem}}
\newcommand{\bex}{\begin{excer}}
\newcommand{\eex}{\end{excer}}

\theoremstyle{definition}

\newtheorem{definition}[theorem]{Definition}

\theoremstyle{remark}

\newtheorem*{rem*}{Remark}
\newtheorem*{ques*}{Question}
\newtheorem*{definition*}{Definition}

\newcommand{\RS}{Riemann surface}

\begin{document}

\title[Optimal lower bound on $\lambda_1$ for large genus]{Optimal lower bounds for first eigenvalues of Riemann surfaces for large genus}

\author{Yunhui Wu and Yuhao Xue}
\address{Yau Mathematical Sciences Center, Tsinghua University, Haidian District, Beijing 100084, China}
\email[(Y.~W.)]{yunhui\_wu@mail.tsinghua.edu.cn}
\email[(Y.~X.) \ ]{xueyh18@mails.tsinghua.edu.cn}

\begin{abstract}
In this article we study the first eigenvalues of closed \RS s for large genus. We show that for every closed \RS \ $X_g$ of genus $g$ $(g\geq 2)$, the first eigenvalue of $X_g$ is greater than $\frac{\sL_1(X_g)}{g^2}$ up to a uniform positive constant multiplication. Where $\sL_1(X_g)$ is the shortest length of multi closed curves separating $X_g$. Moreover,we also show that this new lower bound is optimal as $g \to \infty$. 
\end{abstract}

\maketitle

\section{Introduction}
For any integer $g\geq 2$, let $\sM_g$ be the moduli space of closed \RS s of genus $g$ and $X_g\in \sM_g$ be a closed hyperbolic surface of genus $g$. The spectrum of the Laplacian on $X_g$ is a fascinating topic in several mathematical fields including analysis, geometry, number theory, topology and so on for a long time. The spectrum of $X_g$ is a discrete closed subset in $\R^{\geq 0}$ and consists of eigenvalues with finite multiplicity. We enumerate them, counted with multiplicity, in the following increasing order
\[0=\lambda_0(X_g)<\lambda_1(X_g)\leq \lambda_2(X_g) \leq \cdots.\]

\noindent Buser \cite{Bus77} showed that for any constant $\eps>0$, there exists a hyperbolic surface $\mathcal X_g$ of genus $g$ such that $\lambda_{2g-3}(\mathcal X_g)<\eps$ and $\lambda_{n}(\mathcal X_g)<\frac{1}{4}+\eps$ for any $n\geq (2g-2)$. Recently Otal-Rosas \cite{OR09} showed that $\lambda_{2g-2}(X_g)>\frac{1}{4}$ for any $X_g\in \sM_g$. One may also see Ballmann-Matthiesen-Mondal \cite{BMM16,BMM17} and Mondal \cite{Mon14} for more general statements on $\lambda_{2g-2}(X_g)$.

\begin{definition*}
For any $X_g \in \sM_g$ and integer $i\in [1,2g-3]$, we define a positive quantity $\sL_i(X_g)$ of $X_g$ to be minimal possible sum of the lengths of simple closed geodesics in $X_g$ which cut $X_g$ into $i+1$ pieces.
\end{definition*}

 The quantity $\sL_i(X_g)$ can be arbitrarily closed to $0$ for certain $X_g \in \sM_g$. Schoen-Wolpert-Yau \cite{SWY80} showed that the $i$-th eigenvalue of $X_g$ is comparable to the quantity $\sL_i(X_g)$ of $X_g$ above. More precisely, they showed that for any integer $i \in [1, 2g-3]$, there exists two constants $\alpha_i(g)>0$ and $\beta_i(g)>0$, depending on $g$, such that for any $X_g\in \sM_g$, \bear \label{SWY-eigen} \alpha_i(g)\leq \frac{\lambda_i(X_g)}{\sL_i(X_g)}\leq \beta_i(g).\eear
One may see Dodziuk-Randol \cite{DR86} for a different proof of Schoen-Wolpert-Yau's theorem, and see Dodziuk-Pignataro-Randol-Sullivan \cite{DPRS85} on similar results for \RS s with punctures. The upper bounds in \eqref{SWY-eigen} follow by suitable choices of test functions on certain collars, whose central closed geodesics are part of a pants decomposition of $X_g$ whose boundary curves have bounded lengths in terms of $g$, which is due to Bers \cite{Bers-c}. For proving the lower bounds in \eqref{SWY-eigen}, one essential step is to show that $\lambda_1(X_g)\geq \alpha_1(g) \cdot \sL_1(X_g)$. Which was applied in \cite{DR86} to obtain the lower bounds in \eqref{SWY-eigen} for other eigenvalues $\lambda_i(X_g)\ (2\leq i \leq 2g-3)$, together by using a mini-max principle. 

In this paper we study the asymptotic behavior of the constant $\alpha_1(g)$ in \eqref{SWY-eigen} for large genus. The method in this article is motivated by \cite{DR86} of Dodziuk-Randol. We prove 
\bt \label{mt-1}
For every $g\geq 2$, there exists a uniform constant $K_1>0$ independent of $g$ such that for any hyperbolic surface $X_g\in \sM_g$, the first eigenvalue $\lambda_1(X_g)$ of $X_g$ satisfies that
\[\lambda_1(X_g)\geq K_1\cdot \frac{\mathcal{L}_1(X_g)}{g^2}.\]
\et

\noindent By \cite{SWY80} we know that Theorem \ref{mt-1} is optimal as $\mathcal{L}_1(X_g)\to 0$. Actually Theorem \ref{mt-1} is also optimal as $g\to \infty$: in \cite{WX18} we constructed a hyperbolic surface $\mathcal{X}_g\in \sM_g$ for all $g\geq 2$ such that the first eigenvalue satisfies that
\[\lambda_1(\mathcal{X}_g)\leq K_2 \cdot \frac{\mathcal{L}_1(\mathcal{X}_g)}{g^2}\]
where $K_2>0$ is a uniform constant independent of $g$. In Section \ref{sec-opti} we will discuss the details to see that Theorem \ref{mt-1} is optimal simultaneously as $\mathcal{L}_1(X_g)\to 0$ and $g \to \infty$.

\subsection*{One open question} It is interesting to study the optimal asymptotic behaviors of the other constants $\alpha_i(g) \ (2\leq i \leq 2g-3)$ in Schoen-Wolpert-Yau's theorem as $g\to \infty$. The answer to the following question is \emph{unknown}.

\begin{ques*}\label{ulb-eig-lg}
For every $g\geq 2$, does there exist a uniform constant $K>0$ independent of $g$ such that for any hyperbolic surface $X_g\in \sM_g$ and $i\in [1,2g-3]$, the $i$-th eigenvalue $\lambda_i(X_g)$ of $X_g$ satisfies that
\[\lambda_i(X_g)\geq K\cdot \frac{i\cdot \mathcal{L}_i(X_g)}{g^2}?\]
\end{ques*}

\begin{rem*}
Theorem \ref{mt-1} affirmatively answers the question above for case $i=1$.
\end{rem*}

\subsection*{Plan of the paper.}
The paper is organized as follows. In Section \ref{sec-prel} we collect preliminaries for closed hyperbolic surfaces and provide several necessary technical lemmas. In Section \ref{sec-gap} we follow \cite{DR86} to prove that if the first eigenvalue $\lambda_1(X_g)$ is small, then there exists a uniform gap in terms of $g$ between the oscillation of the first eigenfunction on certain modified thick part and the total oscillations of the first eigenfunction on all components of this modified thick part, which is Proposition \ref{eige-gap}. For this reason, the first eigenfunction has large oscillation on the modified thin part consisting of certain collars. By definition the first eigenvalue is greater than the energy of its eigenfunction on the modified thin part. Then in Section \ref{sec-mt-1} by using some combinatorial discussion and estimations in previous sections, we finish the proof of Theorem \ref{mt-1}. In the last Section we discuss an example to see that the lower bound in Theorem \ref{mt-1} is optimal as $g$ goes to $\infty$.  

\subsection*{Acknowledgements.}
The authors would like to thank Long Jin for helpful discussions. The first named author is partially supported by a grant from Tsinghua University. We are grateful to one referee to point out a mistake in our original statement of Lemma \ref{sign change}. We are also grateful to another referee for useful comments and suggestions which are helpful.

%%%%%%%%%%%%%%%%%%%%%%%%%%%%%%%%%%%%%
\section{Preliminaries}\label{sec-prel}
In this section we will set up the notations and provide some necessary background on two-dimensional hyperbolic geometry and spectrum theory of hyperbolic surfaces.
%%%%%%%%%%%%%%%
\subsection{Hyperbolic surfaces}
Let $X_g$ be a closed hyperbolic surface of genus $g\geq 2$ and $\gamma \subset X_g$ be a non-trivial loop. There always exists a unique closed geodesic, still denoted by $\gamma$, representing this loop. The Collar Lemma says that it has a tubular neighborhood which is a topological cylinder with a standard hyperbolic metric. And the width of this cylinder, only depending on the length of $\gamma$, goes to infinity as the length of $\gamma$ goes to $0$. We use the following version \cite[Theorem 4.1.1]{Buser10} of the Collar Lemma.

\begin{lemma}[Collar lemma]\label{collar}
	Let $\gamma_1 , \gamma_2, ..., \gamma_m$ be disjoint simple closed geodesics on a closed hyperbolic Riemann surface $X_g$, and $\ell(\gamma_i)$ be the length of $\gamma_i$. Then $m\leq 3g-3$ and we can define the collar of $\gamma_i$ by
	$$T(\gamma_i)=\{x\in X_g; \ \dist(x,\gamma_i)\leq w(\gamma_i)\}$$
	where
	$$w(\gamma_i)=\mathop{\rm arcsinh} \frac{1}{\sinh \frac{1}{2}\ell(\gamma_i)}$$
	is the half width of the collar.
	
	Then the collars are pairwise disjoint for $i=1,...,m$. Each $T(\gamma_i)$ is isomorphic to a cylinder $(\rho,t)\in [-w(\gamma_i),w(\gamma_i)] \times \mathbb S ^1$, where $\mathbb S ^1 = \R / \Z$, with the metric
\bear\label{collar-metric}	
ds^2=d\rho^2 + \ell(\gamma_i)^2 \cosh^2\rho dt^2.
\eear
	And for a point $(\rho,t)$, the point $(0,t)$ is its projection on the geodesic $\gamma_i$, $\abs{\rho}$ is the distance to $\gamma_i$, $t$ is the coordinate on $\gamma_i \cong \mathbb S ^1$.
\end{lemma}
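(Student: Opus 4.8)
The plan is to establish the three assertions of Lemma~\ref{collar} — the bound $m\le 3g-3$, the pairwise disjointness and self-embeddedness of the collars $T(\gamma_i)$, and the isometry with the model cylinder \eqref{collar-metric} — essentially in this order, following \cite[Ch.~4]{Buser10}.

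\emph{The counting bound.} First I would observe that distinct pairwise disjoint simple closed geodesics are automatically pairwise non-freely-homotopic (freely homotopic closed geodesics on a hyperbolic surface coincide) and each is essential (a closed geodesic is never null-homotopic). Hence $\{\gamma_1,\dots,\gamma_m\}$ can be completed to a pants decomposition: one keeps adjoining disjoint essential simple closed geodesics, pairwise non-homotopic, until every complementary component is a three-holed sphere. Since $\chi(X_g)=2-2g$ and each pair of pants has Euler characteristic $-1$, there are $2g-2$ pants, hence $3(2g-2)/2=3g-3$ curves in the decomposition, and $m\le 3g-3$.

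\emph{The model cylinder.} Next I would fix one geodesic $\gamma$ of length $\ell$, lift it to the axis $A$ of a primitive hyperbolic element $\gamma_0\in\Gamma=\pi_1(X_g)\subset\mathrm{PSL}(2,\R)$ of translation length $\ell$, and introduce Fermi coordinates $(\rho,t)$ along $A$ via the normal exponential map, with $s=\ell t$ the arclength on $A$ and $t\in\R/\Z$. Along each normal geodesic the width factor $f(s,\rho)$ solves the Jacobi equation $\partial_\rho^2 f=f$ with $f(s,0)=1$ and $\partial_\rho f(s,0)=0$ (as $A$ is a geodesic), so $f=\cosh\rho$ and the induced metric is $d\rho^2+\cosh^2\rho\,ds^2=d\rho^2+\ell^2\cosh^2\rho\,dt^2$, which is \eqref{collar-metric}; in these coordinates $(0,t)$ is the nearest point of $\gamma$, $|\rho|=\dist(\cdot,\gamma)$ and $t$ parametrizes $\gamma$. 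This description is valid on the range of $\rho$ over which the quotient map to $X_g$ stays injective, which is what the next step controls.

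\emph{Disjointness and embeddedness — the crux.} Writing $w_i=w(\gamma_i)$, $\ell_i=\ell(\gamma_i)$, the defining relation is $\sinh w_i\,\sinh\frac{1}{2}\ell_i=1$. The key claims are $\dist_{X_g}(\gamma_i,\gamma_j)\ge w_i+w_j$ for $i\ne j$ and that the shortest geodesic loop meeting $\gamma_i$ orthogonally at both ends has length $\ge 2w_i$; together these give that the $T(\gamma_i)$ are embedded and pairwise disjoint, and hence (by the previous step) isometric to the half-width-$w_i$ model cylinder. To prove them I would lift to $\H$: since the $\gamma_i$ are simple and pairwise disjoint, any two lifts $\widetilde\gamma_i,\widetilde\gamma_j$ (or a lift $A$ of $\gamma_i$ and a translate $gA$ with $g\in\Gamma\setminus\langle\gamma_0\rangle$) are disjoint geodesics of $\H$, for a crossing of lifts would project to an intersection point among the $\gamma$'s. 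By Klein's combination theorem the two corresponding hyperbolic elements generate a discrete free group $H\cong F_2$ whose quotient $\Sigma=H\backslash\H$ is a complete hyperbolic surface with $\chi(\Sigma)=-1$ and, the axes being disjoint, a three-holed sphere; the images $\bar\gamma_i,\bar\gamma_j$ are disjoint simple closed geodesics in $\Sigma$, hence two of its three boundary geodesics. Now I would invoke the collar lemma for a \emph{single pair of pants} — proved by cutting $\Sigma$ along its three orthogeodesics into two right-angled hexagons and then into Lambert quadrilaterals, and applying the standard Lambert-quadrilateral trigonometry; this is exactly where the value $w=\arcsinh(1/\sinh(\ell/2))$ is forced, and it yields that the three boundary collars of half-widths $\arcsinh(1/\sinh(L/2))$ are embedded and pairwise disjoint in $\Sigma$. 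Thus $\dist_\Sigma(\bar\gamma_i,\bar\gamma_j)\ge w_i+w_j$, and since $\H\to\Sigma$ is $1$-Lipschitz and the chosen lifts project to $\bar\gamma_i,\bar\gamma_j$, we get $\dist_\H(\widetilde\gamma_i,\widetilde\gamma_j)\ge w_i+w_j$; taking the infimum over lift pairs gives the claim on $X_g$. The self-loop statement is the same argument applied to $A$ and $gA$.

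\emph{Main obstacle.} The substantive content, and the source of the precise constant, is the single--pair-of-pants estimate inside the crux step: the hyperbolic trigonometry in the right-angled hexagon — equivalently in the Lambert quadrilateral obtained by halving it — that produces exactly $w=\arcsinh(1/\sinh(\ell/2))$ together with the sharp disjointness. A secondary but genuine point requiring care is the topological reduction (that a pair of disjoint simple geodesics, or a geodesic and a distinct lift of itself, is governed by an associated pair of pants to which that estimate applies) and the verification, via the self-loop bound $\ge 2w_i$, that the metric tube $\{\dist(\cdot,\gamma_i)\le w_i\}$ in $X_g$ neither meets itself nor wraps around. All of this is carried out in detail in \cite[Ch.~4]{Buser10}.
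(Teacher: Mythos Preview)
Your sketch is a faithful outline of the standard proof from \cite[Ch.~4]{Buser10} and I see no genuine gap in it. Note, however, that the paper does not prove this lemma at all: it is quoted as \cite[Theorem~4.1.1]{Buser10} and used as a black box, so there is no ``paper's own proof'' to compare against---you have simply supplied the argument that the cited reference contains.
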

\noindent As the length $\ell(\gamma)$ of the central closed geodesic goes to $0$, the width \bear \label{wid-large} e^{w(\gamma)} \sim \frac{4}{\ell(\gamma)}\eear which tends to infinity. In this paper, we mainly deal with the case that $\ell(\gamma)$ is small and so $w(\gamma)$ is large.

Note that a collar is homeomorphic to a cylinder which may have arbitrary large width. When saying a collar in this paper, we will not always assume it has the maximal width given in Lemma \ref{collar}. Actually we will use a subcollar with a slightly shorter width. Now we make certain elementary computations on a collar. Assume $\gamma \sbs X_g$ is a closed geodesic of length $\ell>0$. There are two types of coordinates on a collar of $\gamma$. One is $(\rho,t)$ given in Lemma \ref{collar}. The other one is the polar coordinate $(r,\theta)$ on the upper half plane $\mathbb H$. Which may be written as
\begin{equation}\label{coord trans}
\begin{cases}
\rho &= -\log \tan \frac{\theta}{2} \\
t &= \frac{1}{\ell} \log r.
\end{cases}
\end{equation}

\noindent For any two points $z$ and $w\in\mathbb H$, the hyperbolic distance $\dist_\H(z,w)$ satisfies that
\begin{equation}\label{hyper dist}
\cosh \text{dist}_{\mathbb H} (z,w) = 1+ \frac{|z-w|^2}{2\mathop{\rm Im} z \mathop{\rm Im} w}.
\end{equation}

\noindent For any two points $(\rho,0)$ and $(\rho,t)$ in the coordinate given in Lemma \ref{collar} which have the same distance to the center closed geodesic $\gamma$, the curve $s\mapsto (\rho,s)$ where $0\leq s \leq t$ is not a geodesic. However one may compute the length $L((\rho,0),(\rho,t))$ of the geodesic homotopic to that curve (see figure \ref{collar1}) by using (\ref{hyper dist}). Actually we have
\begin{equation}
\cosh L((\rho,0),(\rho,t)) = \cosh \text{dist}_{\mathbb H} (e^{i\theta},re^{i\theta}) = 1+ \frac{|r-1|^2}{2r\sin^2\theta} \nonumber
\end{equation}
where $(r,\theta)$ is given in \eqref{coord trans}.
\begin{figure}[ht]
	\begin{center}
		\begin{tikzpicture}[scale=0.8]
		
		\draw (0,0) +(0,0.5) ..controls+(-2,0)and+(0.8,-0.5).. +(-7,2)
		+(0,0.5) ..controls+(2,0)and+(-0.8,-0.5).. +(7,2)
		+(0,-0.5) ..controls+(-2,0)and+(0.8,0.5).. +(-7,-2)
		+(0,-0.5) ..controls+(2,0)and+(-0.8,0.5).. +(7,-2);
		
		\draw (0,0) +(0,0.5) ..controls+(-0.1,-0.5).. +(0,-0.5);
		\draw[dashed] (0,0) +(0,0.5) ..controls+(0.1,-0.5).. +(0,-0.5);
		
		\draw[->] (-2,1.8)--(2,1.8);
		\draw (0,2.1) node {$\rho$};
		
		\draw[->] (-0.2,-0.4) ..controls+(-0.1,0.4).. (-0.2,0.4);
		\draw (-0.7,0) node {$t$};
		
		\draw (4.1,0) +(0,1.1) ..controls+(-0.2,-1.1).. +(0,-1.1);
		\draw[dashed] (4.1,0) +(0,1.1) ..controls+(0.2,-1.1).. +(0,-1.1);
		
		\filldraw (4,-0.7)circle[radius=0.03];
		\filldraw (4,0.7)circle[radius=0.03];
		
		\draw (4,0) +(0,0.7) ..controls+(-0.5,-0.2)and+(-0.5,0.2).. +(0,-0.7);
		\draw (4.6,-0.7) node {$(\rho,0)$};
		\draw (4.6,0.7) node {$(\rho,t)$};
		
		\draw[->] (3.6,-0.3)--(2,-1.2);
		\draw (2,-1.5) node {geodesic};
		
		\draw[->] (4,0)..controls+(2,-0.2)and+(1,1).. (5,-1.9);
		\draw (4.3,-2.2) node {the curve $s \mapsto (\rho,s)$};
		
		\end{tikzpicture}
	\end{center}
	\caption{} \label{collar1}
\end{figure}

\noindent By applying \eqref{coord trans} we have
\begin{equation}\label{dist collar}
\sinh \frac{L((\rho,0),(\rho,t))}{2} = \sinh \frac{t\ell}{2} \cosh \rho.
\end{equation}
In particular, the injectivity radius $\inj(\rho,0))$ at the point $(\rho,0)$ satisfies
\begin{equation}\label{inj collar}
\sinh \inj ((\rho,0)) = \sinh \frac{L((\rho,0),(\rho,1))}{2} = \sinh \frac{\ell}{2} \cosh \rho.
\end{equation}

Let $[-w,w]\times \mathbb S^1$ endowed with the hyperbolic metric given in \eqref{collar-metric}. We assume that the center closed geodesic has length of $\ell$ and the width is $w$ which may not be the maximal width given in Lemma \ref{collar}. Then the hyperbolic volume of the collar $[-w,w]\times \mathbb S^1$ is
\begin{align} \label{area collar}
	\Vol([-w,w]\times \mathbb S^1)
	&= \int_0^1 \int_{-w}^w \ell\cosh \rho d\rho dt  \\
	&= 2\ell\sinh w.\nonumber
\end{align}

One may refer to \cite{Buser10} for more details in this subsection.

%%%%%%%%%%%%%%%%%%%%%%%%
\subsection{Thick-thin decomposition}\label{thick-thin decomp}
In this subsection, we make a thick-thin part decomposition of $X_g\in \sM_g$. For any $p \in X_g$ we let $\inj(p)$ denote the injectivity radius of $X_g$ at $p$. For any given constant $\eps>0$, we define
$$X_g^{\geq \eps}:=\{p\in X_g; \ \inj(p)\geq\eps \}$$
to be the $\eps$-thick part of $X_g$, and its complement
$$X_g^{<\eps}:=\{p\in X_g; \ \inj(p)<\eps \}$$
to be the $\eps$-thin part of $X_g$. By the Collar Lemma \ref{collar}, we know that for a small $\eps>0$, the thin part $X_g^{<\eps}$ consists of certain disjoint collars (or empty).

In this paper we use a modified thick-thin decomposition. More precisely, for a small enough given constant $\eps>0$ (given in Lemma \ref{thick-thin}), we consider all closed geodesics of length less than $2\eps$ which are denoted by $\gamma_1,...,\gamma_m$ for some $m\geq 0\in \Z$. Consider the collar $T_i$ of each $\gamma_i$ defined by
\begin{equation}
T_i := \{x \in X_g; \ \dist(x,\gamma_i) \leq w_i\}
\end{equation}
where $\ell_i$ is the length of $\gamma_i$ and $w_i$ is the width of $T_i$ given by
\begin{equation}\label{w_i}
0\leq w_i = \max\{0, \arcsinh \frac{1}{\sinh \frac{1}{2}\ell_i} - 2\}.
\end{equation}
Here the width $w_i$ is less that the maximal width $w(\gamma_i)$ in Lemma \ref{collar}. Thus, for small enough $\eps>0$, $\{T_i\}_{1\leq i \leq m}$ are also pairwisely disjoint.

\begin{definition}\label{def thick-thin}
	We define
	\begin{equation}\label{def-sB}
	\mathcal B: = \bigcup_{i=1}^m T_i.
	\end{equation}
which is called the \emph{$\eps$-modified thin part} of $X_g$. And we define
	\begin{equation}\label{def-sA}
	\mathcal A: = \overline{X_g \setminus \mathcal B}
	\end{equation}
which is called the \emph{$\eps$-modified thick part} of $X_g$. Furthermore, for each $\gamma_i$, we define
	\begin{equation}\label{def-S}
	S_i: = \{x \in X_g; \ w_i \leq \dist(x,\gamma_i) \leq w_i +1\} 
	\end{equation}
which is called the \emph{shell} of $T_i$.
\end{definition}

\noindent For the modified $\eps$-thick-thin decomposition above, we have the following properties which will be applied in the proof of Proposition \ref{oscillation} and \ref{max thick part} to deal with some technical details. They just follow from some elementary computations on collars.

\begin{lemma}\label{thick-thin}
	There exists a uniform constant $\eps>0$ independent of $g$ such that for every hyperbolic surface $X_g$ of genus $g$, the modified $\eps$-thick-thin decomposition $X_g=\mathcal A \cup \mathcal B$ satisfies the following properties. 
\ben
\item The width of a collar given by (\ref{w_i}) satisfies
	\begin{equation}
	w_i\geq 1 > \eps. \nonumber
	\end{equation}
\item The closed geodesics $\gamma_1,...,\gamma_m\in \sB$ are disjoint. The corresponding $T_1 \cup S_1,...,T_m \cup S_m$ are also disjoint. Moreover, for each $i  \in [1,m]$, $S_i \sbs \mathcal A$.

\item The volumes of all collars $T_i$ and shells $S_i$ are bounded. More precisely,
	\begin{equation}
	\frac{1}{2}\leq \Vol(T_i) \leq 4 \ \ \ \text{and}\ \ \ \frac{1}{2}\leq \Vol(S_i) \leq 4. \nonumber
	\end{equation}
\item For each point $p\in \mathcal A$,
	\begin{equation}
	\inj (p) \geq \eps  .\nonumber
	\end{equation}
\item For any $\eta \in (0, \eps]$, if two points $p$ and $q$ in one component of $\mathcal A$ have distance $\dist(p,q) = \eta$ in $X_g$, then there exists a path connecting $p$ and $q$ which is contained in the component such that it has length less than $5\eta$.

\een
\end{lemma}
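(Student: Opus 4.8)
The plan is to fix $\eps>0$ small and verify the five items in order; most of them are elementary consequences of the collar geometry set up in Lemma~\ref{collar} and formulas \eqref{dist collar}--\eqref{area collar}. The only geodesics that enter $\mathcal B$ are those with $\ell_i<2\eps$, so throughout we may assume $\ell_i$ is as small as we like by choosing $\eps$ small. For (1), I would use \eqref{wid-large}: as $\ell_i\to 0$ one has $\arcsinh\frac{1}{\sinh(\ell_i/2)}\sim \log\frac{4}{\ell_i}\to\infty$, so for $\eps$ small enough $\arcsinh\frac{1}{\sinh(\ell_i/2)}-2\geq 1$, giving $w_i\geq 1>\eps$ once we also insist $\eps<1$. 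For (2): the maximal collars $T(\gamma_i)$ of Lemma~\ref{collar} are pairwise disjoint, and $T_i\cup S_i$ has half-width $w_i+1\leq \arcsinh\frac{1}{\sinh(\ell_i/2)}-1 < w(\gamma_i)$, so $T_i\cup S_i\subset T(\gamma_i)$ and hence the $T_i\cup S_i$ are pairwise disjoint; that $S_i\subset\mathcal A$ is immediate from the definitions of $\mathcal B$ and $S_i$, since $S_i$ lies at distance $\geq w_i$ from $\gamma_i$ and is disjoint from every other $T_j$.

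For (3) I would compute volumes directly from \eqref{area collar}. By definition $w_i=\arcsinh\frac{1}{\sinh(\ell_i/2)}-2$ (when this is positive), so $\sinh w_i$ and $\cosh w_i$ are controlled: $\Vol(T_i)=2\ell_i\sinh w_i$, and using $\sinh(a-2)=\sinh a\cosh 2-\cosh a\sinh 2$ with $\sinh a=\frac{1}{\sinh(\ell_i/2)}$ and $\cosh a=\sqrt{1+\sinh^2 a}$, one gets $\Vol(T_i)\to 4\cosh 2 - 4\sinh 2 = 4e^{-2}$ as $\ell_i\to 0$; since $4e^{-2}\in[\tfrac12,4]$, for $\eps$ small the bound $\tfrac12\leq\Vol(T_i)\leq 4$ holds (and when $w_i=0$ the collar is degenerate but then $\ell_i$ is not small — this case is excluded by the choice of $\eps$, since $w_i=0$ forces $\ell_i\geq 2/\sinh(\arcsinh^{-1}\!...) $, a fixed constant, contradicting $\ell_i<2\eps$). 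Similarly $\Vol(S_i)=\Vol(T_i\cup S_i)-\Vol(T_i)=2\ell_i(\sinh(w_i+1)-\sinh w_i)\to 4e^{-2}(\cosh 1 + \sinh 1 - 1)\cdot(\text{const})$, a fixed number in $[\tfrac12,4]$ after shrinking $\eps$; I would just record the limiting constants and note continuity in $\ell_i$. Item (4) is the point of the construction: a point $p\in\mathcal A$ either lies outside every maximal collar $T(\gamma_i)$, where $\inj(p)\geq \eps$ by definition of the $\gamma_i$ (all geodesics through the thick region have length $\geq 2\eps$), or lies in $T(\gamma_i)\setminus T_i$; in the latter case $\dist(p,\gamma_i)\geq w_i$, so by \eqref{inj collar} $\sinh\inj(p)\geq \sinh(\ell_i/2)\cosh w_i \to \sinh(\ell_i/2)\cdot\tfrac14\cdot\frac{1}{\sinh(\ell_i/2)}\cdot(\text{const})$, i.e.\ $\inj(p)$ is bounded below by a fixed constant $\geq\eps$ after shrinking $\eps$ — here I would use $\cosh w_i \sim \frac14 e^{w_i}\sim \frac14 e^{a-2} = \frac{e^{-2}}{4}\cdot\frac{4}{\ell_i} = \frac{e^{-2}}{\ell_i}$ and $\sinh(\ell_i/2)\sim \ell_i/2$, giving $\sinh\inj(p)\gtrsim e^{-2}/2$.

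The main obstacle — and the one step requiring genuine care — is (5): given $p,q$ in the same component of $\mathcal A$ with $\dist_{X_g}(p,q)=\eta\leq\eps$, produce a path inside that component of length $<5\eta$. The issue is that the minimizing geodesic between $p$ and $q$ in $X_g$ might dip into $\mathcal B$, so we must reroute it along the boundary $\partial T_i$. The strategy: since $\eta\leq\eps$ is much smaller than every width $w_i\geq 1$, the minimizing segment can only penetrate a single collar $T_i$ and only near its boundary; replace the portion inside $T_i$ by a path along the boundary circle $\{\dist(\cdot,\gamma_i)=w_i\}$. Using the collar metric \eqref{collar-metric}, the boundary circle $\{\rho = w_i\}$ has length $\ell_i\cosh w_i$, but more relevantly a sub-arc subtending the same $t$-endpoints has length $\le (\text{sub-arc in }t)\cdot \ell_i\cosh w_i$, and \eqref{dist collar} relates this to the chord length: if the chord (geodesic) has length $L$ then the boundary arc has length comparable to $L$ up to a factor controlled by $\cosh w_i$ versus the behavior of $\sinh$, and since the geodesic enters only to depth $\lesssim\eta$ past $\partial T_i$, a comparison of $\sinh\frac{t\ell_i}{2}\cosh\rho$ at $\rho=w_i$ versus $\rho = w_i + O(\eta)$ shows the rerouted path is longer by at most a bounded factor. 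I would choose the constant $5$ by carrying out this chord-vs-arc estimate in the collar and bounding the distortion; shrinking $\eps$ further if needed guarantees $\eta$ is small enough that the "enters only near the boundary" dichotomy is valid and only one collar is involved. This is the step where the specific constant $5$ and the specific shift by $2$ in \eqref{w_i} are used in concert.
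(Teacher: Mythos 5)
Your plan for items (1)--(4) tracks the paper's proof closely: (1) and (2) are exactly the paper's observations about the half-width and the disjointness of the $T_i\cup S_i$ inside the maximal collars; (3) proceeds by the same limit computation $\Vol(T_i)\to 4/e^2$, $\Vol(S_i)\to 4(e-1)/e^2$ (the paper additionally records the clean monotone upper bound $\Vol(T_i)+\Vol(S_i)=2\ell_i\sinh(w_i+1)\leq 2\ell_i/\sinh(\ell_i/2)\leq 4$, which is a bit tighter than ``note continuity,'' but this is cosmetic); and (4) is the same estimate $\sinh\inj(p)\geq\sinh(\ell_i/2)\cosh w_i\geq e^{-2}$ via $\cosh(x-y)\geq e^{-y}\cosh x$, phrased directly rather than by contradiction. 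The edge-case remark about $w_i=0$ in (3) is harmless but redundant given (1).

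The genuine gap is in (5), and you identify it yourself: ``I would choose the constant $5$ by carrying out this chord-vs-arc estimate'' is a to-do, not a proof, and this is exactly the step where the lemma earns its keep. Your rerouting scheme (cut out the sub-arc of the minimizing geodesic $\alpha$ lying inside $T_i$ and replace it by an arc of the boundary circle $\{\rho=w_i\}$) is plausible and differs from the paper's construction, which first observes that $p,q$ must lie in a single shell $S_i$ (because $\eta<1=$ shell width) and then builds an explicit L-shaped path $p\to r\to q$ through $r=(\rho_1,t_2)$: a circular arc $\beta$ at $\rho=\rho_1$ plus a radial segment $\alpha_2$. Either way, the crucial quantitative input is \eqref{dist collar} together with the calibration $\sinh\eps\leq 2\eps$: in the paper one bounds $\ell(\alpha_2)\leq\eta$, then $\ell(\alpha_1)\leq\ell(\alpha)+\ell(\alpha_2)\leq 2\eta$ for the geodesic $\alpha_1$ homotopic to $\beta$, and finally $\ell(\beta)=|t_1-t_2|\ell_i\cosh\rho_1\leq 2\ell(\alpha_1)\leq 4\eta$ because $\sinh\tfrac{\ell(\alpha_1)}{2}=\sinh\tfrac{|t_1-t_2|\ell_i}{2}\cosh\rho_1$ and both half-lengths are $\leq\eps$, giving $5\eta$ total. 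You gesture at this but don't execute it, and two supporting claims are asserted without proof: that $\alpha$ can meet only one $T_i$ (this does follow from shell width $1>\eta$ and disjointness of the $T_j\cup S_j$, but should be said), and that $\alpha$ enters $T_i$ ``only near its boundary.'' There is also a small geometric slip: you compare $\sinh\tfrac{t\ell_i}{2}\cosh\rho$ at $\rho=w_i$ versus $\rho=w_i+O(\eta)$, but the portion of $\alpha$ that needs rerouting lies at $\rho<w_i$ (inside the collar), while $p,q$ sit in the shell at $\rho>w_i$; which radius you compare against depends on which rerouting you commit to, and as written the two are conflated. Pin down one construction and run the $\sinh$-comparison through; until then, (5) is not proved.
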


\begin{proof}
	(i) Recall that the length $\ell_i$ of the closed geodesic $\gamma_i$ is less than $2\eps$. The width $w_i$ is given by (\ref{w_i}). So we only need $\eps>0$ to be small enough such that
	\begin{equation}\label{eps (i)}
	\arcsinh \frac{1}{\sinh \eps} - 2 \geq 1 > \eps.
	\end{equation}\

	(ii) By Lemma \ref{collar}, for $\eps < \arcsinh 1$  the closed geodesics $\{\gamma_1,...,\gamma_m\}$ are disjoint. By definition we know that for each $1\leq i \leq m$, \[T_i \cup S_i=\{x\in X_g; \ \dist(x, \gamma_i)\leq \arcsinh (\frac{1}{\sinh \frac{1}{2}\ell_i}) - 1\}\]
which is contained in the collar of center closed geodesic $\gamma_i$ with maximal width $\arcsinh (\frac{1}{\sinh \frac{1}{2}\ell_i})$. By Lemma \ref{collar} we know that $\{T_1 \cup S_1,...,T_m \cup S_m\}$ are also disjoint. By definition we clearly have that $S_i \sbs \mathcal A$ for each $i  \in [1,m]$.\\

	(iii) By (\ref{area collar}), we have
	\begin{equation}
	\Vol (T_i) = 2\ell_i \sinh w_i \nonumber
	\end{equation}
and
	\begin{equation}
	\Vol (S_i) = 2\ell_i \sinh (w_i +1) - 2\ell_i \sinh w_i. \nonumber
	\end{equation}
	So we have
	\begin{equation}
	\Vol (T_i) + \Vol (S_i) = 2\ell_i \sinh (w_i +1) \leq \frac{2\ell_i}{\sinh \frac{1}{2}\ell_i} \leq 4. \nonumber
	\end{equation}
	As functions of $\ell_i$, by using \eqref{wid-large} we have
	\begin{equation}
	\lim_{\ell_i \to 0}\Vol (T_i) = \lim_{\ell_i \to 0}2\ell_i \cdot \sinh( \log (\frac{4}{\ell_i})-2)= \frac{4}{e^2}, \nonumber
	\end{equation}
and
	\begin{equation}
\lim_{\ell_i \to 0} \Vol (S_i) = \frac{4(e-1)}{e^2}. \nonumber
	\end{equation}
	Thus, for a small enough constant $\eps>0$ we have
	\begin{equation}\label{eps (iii)}
	\frac{1}{2}\leq \Vol (T_i) \leq 4 \ \ \text{and} \ \ \frac{1}{2}\leq \Vol (S_i) \leq 4.
	\end{equation}\

	(iv) For a point $p\in \mathcal A$, suppose that $\inj (p) < \eps$. Then there exists a geodesic loop $\alpha$ based at $p$ with length $\ell(\alpha) < 2\eps$. This closed curve $\alpha$ should be homotopic to a closed geodesic with length less than $2\eps$, so it is one of the $\gamma_1,...,\gamma_m$ denoted by $\gamma_i$. In the collar of such a geodesic $\gamma_i$, one may assume the coordinate of $p$ is $(\rho,0)$ with $\rho \geq w_i$. For small enough $\eps>0$ satisfying $$\sinh(\eps)\leq 2\eps,$$ by \eqref{inj collar} we have
\bear
 2\eps &>& \ell(\alpha) \geq \sinh (\frac{1}{2}\ell(\alpha))  \geq \sinh(\inj(p)) \geq \sinh\frac{\ell_i}{2} \cosh \rho\\
		&\geq& \sinh\frac{\ell_i}{2} \cosh w_i \geq \sinh\frac{\ell_i}{2} \cosh ( \arcsinh (\frac{1}{\sinh \frac{1}{2}\ell_i})) \cdot e^{-2}  \nonumber\\
		&\geq& \frac{1}{e^2} \nonumber
\eear
where we apply the inequality $\cosh (x-y)\geq \cosh(x)e^{-y}$ for all $x,y\geq 0$. So if
	\begin{equation}\label{eps (iv)}
	\eps < \frac{1}{2e^2},
	\end{equation}
	then $\inj (p) \geq \eps$ for all $p\in \mathcal A$. Remark here it is easy to see that $\sinh(\eps)\leq 2\eps$ if $0<\eps < \frac{1}{2e^2}$.\\

	(v) Assume that $p$ and $ q$ are in one component of $\mathcal A$ which have distance $$\dist(p,q) = \eta\leq \eps.$$ If the shortest geodesic connecting $p$ and $q$ lies in $\mathcal A$, then the distance from $p$ to $q$ in $\mathcal A$ is exactly $\eta$. We are done in this case. So one may assume that the shortest geodesic $\alpha$ connecting $p$ and $q$ crosses the boundaries of $\mathcal A$. Now we assume $\eps>0$ satisfies $(i)-(iv)$. In particular, $\eps < \frac{1}{2e^2}$. Since $\dist(p,q)\leq \eps<1$, these two points $p$ and $q$ must lie in one certain shell $S_i$ for some $i\in [1,m]$. Assume in the coordinate on collar $p=(\rho_1,t_1)$, $q=(\rho_2, t_2)$ and $0< \rho_1 \leq \rho_2$. Let $r=(\rho_1,t_2)\in S_i \subset \sA$. We Consider the curve
	$$\alpha_2 : s\mapsto (s,t_2)$$
	connecting $r$ and $q$ in $\mathcal A$ and the curve
	$$\beta : s\mapsto (\rho_1,s)$$
	connecting $p$ and $r$ in $\mathcal A$ such that $\beta \cup \alpha_2$ is homotopic to $\alpha$. By the structure of collar and shell, $\alpha_2$ is a shortest geodesic but $\beta$ is not a geodesic. Consider the shortest geodesic $\alpha_1$ connecting $p$ and $r$ which is homotopic to $\beta$.
	
	\begin{figure}[ht]
		\begin{center}
			\begin{tikzpicture}[scale=0.8]
			
			\draw (0,0) +(-1,2) ..controls+(2,0.1)and+(-2,-0.5).. +(6,3)
			+(-1,-2) ..controls+(2,-0.1)and+(-2,0.5).. +(6,-3);
			
			\draw (0,0) +(0,2.1) ..controls(-0.2,0).. +(0,-2.1);
			\draw[dashed] (0,0) +(0,2.1) ..controls(0.2,0).. +(0,-2.1);
			
			\draw (5,0) +(0,2.8) ..controls(4.8,0).. +(0,-2.8);
			\draw[dashed] (5,0) +(0,2.8) ..controls(5.2,0).. +(0,-2.8);
			
			\draw (0,2.4) node {$\rho = w_i$};
			\draw (5,3.3) node {$\rho = w_i+1$};

			\filldraw (1.5,-1.3)circle[radius=0.03];
			\draw (1.5,-1.6) node {$p$};
			\filldraw (1.5,1.3)circle[radius=0.03];
			\draw (1.5,1.55) node {$r$};
			\filldraw (3.5,1.5)circle[radius=0.03];
			\draw (3.8,1.5) node {$q$};

			\draw (1.5,0) +(0,1.3) ..controls(1.3,0).. +(0,-1.3);
			\draw (1.6,0) node {$\beta$};
			
			\draw (1.5,0) +(0,1.3) ..controls+(-5.8,-0.3)and+(-0.8,0.3).. +(0,-1.3);
			\draw (-1.6,0.5) node {$\alpha_1$};		

			\draw (1.5,1.3) -- (3.5,1.5);
			\draw (2.5,1.6) node {$\alpha_2$};
			
			\draw (1.5,-1.3) ..controls+(-3,0.2)and+(-6,-1.5).. (3.5,1.5);
			\draw (-1.1,-0.5) node {$\alpha$};
			
			\end{tikzpicture}
		\end{center}
		\caption{} \label{shell1}
	\end{figure}

\noindent The geodesic $\alpha_2$ achieves the distance between $\{(\rho,t)\in S_i ;\ \rho= \rho_1 \}$ and$\{(\rho,t)\in S_i ;\ \rho= \rho_2 \}$. So we have
	\begin{equation}
	\ell(\alpha_2) \leq \dist(p,q) = \eta. \nonumber
	\end{equation}
	Since $\alpha_1$ is homotopic to $\alpha \cup \alpha_2^{-1}$ and $\alpha_1$ is a shortest geodesic, by the triangle inequality we have
	\begin{equation}
	\ell(\alpha_1) \leq \ell(\alpha)+\ell(\alpha_2) \leq 2\eta. \nonumber
	\end{equation}
	By (\ref{dist collar}),
	\begin{equation}
	\sinh \frac{\ell(\alpha_1)}{2} = \sinh\frac{|t_1-t_2|\ell_i}{2}\cosh \rho_1. \nonumber
	\end{equation}
	If $\eps>0$ is small enough and satisfies
	\begin{equation}\label{eps (v)}
	 \sinh\eps \leq 2\eps, \nonumber
	\end{equation}
	then
	\begin{equation}
	\ell(\alpha_1) \geq \frac{|t_1-t_2|\ell_i}{2}\cosh \rho_1 \nonumber
	\end{equation}
	because $\frac{\ell(\alpha_1)}{2} \leq \eta \leq \eps$ and $\frac{|t_1-t_2|\ell_i}{2} \leq \frac{\ell_i}{2} \leq \eps$. On the other hand,
	\begin{align}
		\ell(\beta)
		&=|\int_{t_1}^{t_2} \ell_i\cosh \rho_1 ds | \nonumber \\
		&=|t_1-t_2|\ell_i\cosh \rho_1. \nonumber
	\end{align}
	So we have
	\begin{equation}
	\ell(\beta) \leq 2\ell(\alpha_1). \nonumber
	\end{equation}
	Clearly $\beta\cup\alpha_2\subset \sA$ is a path connecting $p$ and $q$ whose length satisfies that
	\begin{align}
		\ell(\beta) + \ell(\alpha_2)
		\leq 2\ell(\alpha_1) + \eta \leq 5\eta. \nonumber 
	\end{align}
	Which completes the proof.
\end{proof}

\begin{rem*} \ben
\item	The uniform constant $\eps>0$ only needs to satisfy (\ref{eps (i)}), (\ref{eps (iii)}) and (\ref{eps (iv)}). For example, one may choose
	$$\eps=0.05.$$
\item The bounds in this lemma are not optimal. But they are good enough to be applied to prove our later propositions.  	
\een
\end{rem*}

%%%%%%%%%%%%%%%%
\subsection{An upper bound for $\sL_i(X_g)$ $(1\leq i \leq (2g-3))$} Recall that for all $g\geq 2$ the Bers' constant $L_g$ is the best possible constant such that for each hyperbolic surface $X_g$ of genus $g$ there exists a pants decomposition whose boundary curves have length less than $L_g$. One may see \cite[Chapter 5]{Buser10} for more details. The following result is due to Bers.
\bt[Bers, \cite{Bers-c}]\label{Bers-c}
For each $g\geq 2$, \[L_g\leq 26(g-1).\]
\et
\noindent In this subsection we apply Theorem \ref{Bers-c} to get an upper bound for $\sL_1(X_g)$ which will be applied to prove Theorem \ref{mt-1}. More precisely, we show
\begin{lemma}\label{ub-L1}
For every hyperbolic surface $X_g$ of genus $g$ and $i\in [1,2g-3]$,
\[\sL_i(X_g)\leq 78\cdot i\cdot (g-1).\]
\end{lemma}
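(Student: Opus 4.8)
The plan is to use Bers' theorem (Theorem \ref{Bers-c}) to produce a pants decomposition of $X_g$ by simple closed geodesics of controlled total length, and then to extract from it a subfamily of $i$ curves whose removal separates $X_g$ into exactly $i+1$ pieces. First I would recall that a pants decomposition of a genus $g$ surface consists of exactly $3g-3$ simple closed curves, cutting $X_g$ into $2g-2$ pairs of pants. By Bers, we may choose such a decomposition $\{\alpha_1,\dots,\alpha_{3g-3}\}$ with $\ell(\alpha_j)\leq L_g\leq 26(g-1)$ for every $j$. The total length of the whole decomposition is therefore at most $(3g-3)\cdot 26(g-1)$, which is far too big; the point is that we only need $i$ of these curves, and we want the cheapest $i$ of them that still does the separating job.

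The combinatorial heart of the argument is the following: consider the dual graph $G$ of the pants decomposition, whose vertices are the $2g-2$ pairs of pants and whose edges are the $3g-3$ curves $\alpha_j$ (a curve $\alpha_j$ lying on the boundary of two distinct pants gives an edge between them; a curve with both sides on the same pair of pants gives a loop). This graph is connected with $2g-2$ vertices and $3g-3$ edges, hence has first Betti number $g$. Cutting $X_g$ along a subcollection $\{\alpha_j : j\in J\}$ yields a surface with $|J|+1$ connected components precisely when removing the corresponding edges from $G$ increases the number of connected components of $G$ by exactly $|J|$, i.e.\ when $\{\alpha_j:j\in J\}$ is (the edge set of) a subforest of $G$ of size $|J|$ — equivalently, when these edges contain no cycle of $G$. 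Since $G$ has $2g-2$ vertices, a spanning forest (here spanning tree, as $G$ is connected) has $2g-3$ edges, so for any $i\in[1,2g-3]$ we can find a set $J$ of $i$ edges forming a forest, and we are free to choose these $i$ edges to be the $i$ \emph{shortest} curves among a spanning-tree's worth of edges. Better: order $\alpha_1,\dots,\alpha_{3g-3}$ by increasing length and run the greedy (Kruskal) algorithm to build a spanning tree of $G$ using the $2g-3$ cheapest acyclic choices; the first $i$ edges selected then form a forest of $i$ edges whose total length is at most the sum of the $i$ smallest lengths in the whole decomposition, hence at most $i\cdot L_g \leq 26\,i\,(g-1)$.

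This last bound $26\,i\,(g-1)$ is already within the claimed $78\,i\,(g-1)$, so in fact a naive estimate suffices and the factor $78=3\cdot 26$ leaves ample slack; one does not even need the greedy refinement — simply taking \emph{any} forest of $i$ edges of $G$ gives total length $\le 26\,i\,(g-1)$. (The slack presumably absorbs the possibility that one prefers to phrase things via a spanning tree and bound $2g-3$ curves, or wants room for curves traversed "twice"; in any case $\sL_i(X_g)\le\sum_{j\in J}\ell(\alpha_j)\le 26\,i\,(g-1)\le 78\,i\,(g-1)$.) The one genuine point to be careful about is the translation between "cutting along $\{\alpha_j:j\in J\}$ produces $i+1$ pieces" and "$J$ is acyclic in $G$": I would verify this by the Euler-characteristic / connectivity count above, namely that each edge removal from a graph either disconnects a component (if it is a bridge) or leaves the component count unchanged, and that removing an acyclic set of $i$ edges from $G$ — every such edge being a bridge in the graph obtained after removing the previous ones — raises the component count by exactly $i$, giving $1+i$ components of $G$ and hence $i+1$ pieces of $X_g$. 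I expect this bookkeeping, rather than any estimate, to be the only real content; everything else is an immediate consequence of Bers' theorem and the numerics $3\cdot 26=78$.
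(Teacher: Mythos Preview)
Your dual-graph set-up is natural, but the central combinatorial claim is false: it is \emph{not} true that deleting an acyclic edge set $J$ from $G$ raises the component count by $|J|$. The number of components of $X_g$ cut along $\{\alpha_j:j\in J\}$ equals the number of components of the graph $G\setminus J$, and one has $c(G\setminus J)=1+|J|$ if and only if every edge of $J$ is already a bridge of the original graph $G$; this is far stronger than ``$J$ is a forest''. Your justification (``every such edge being a bridge in the graph obtained after removing the previous ones'') would be valid if $G$ were itself a tree, but the dual graph of a pants decomposition is $3$-regular with first Betti number $g$, and an edge lying in a spanning tree of $G$ need not be a bridge of $G$. Concretely, for $g=3$ choose a pants decomposition whose dual graph is $K_4$; since $K_4$ is $3$-edge-connected, deleting any forest of $i\in\{1,2,3\}$ edges leaves strictly fewer than $i+1$ components, so no subfamily of $i$ pants curves whatsoever cuts this $X_3$ into $i+1$ pieces.

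The paper's proof sidesteps this obstacle by a cruder choice that costs a factor of $3$: it takes $S$ to be the set of \emph{all} boundary curves of the first $i$ pairs of pants $\mathcal P_1,\dots,\mathcal P_i$. In the dual graph this deletes every edge incident to $i$ specified vertices; those vertices become isolated and the remaining $2g-2-i\geq 1$ vertices contribute at least one further component, so cutting along $S$ yields at least $i+1$ pieces. Since $G$ is $3$-regular this uses at most $3i$ curves, each of length at most $L_g\le 26(g-1)$ by Bers, and $3\cdot 26=78$ is exactly the constant in the lemma. Your argument is repaired by making the same concession; the sharper constant $26$ you were aiming for does not follow from this route.
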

\bp
Let $\{\mathcal{P}_j\}_{1\leq j \leq (2g-2)}$ be a pants decomposition of $X_g$, where each $\mathcal{P}_j$ is a pair of pants, such that
\bear\label{ub-L1-1}
\max_{1\leq j \leq (2g-2)}\max_{\alpha \in \partial \mathcal{P}_j}\ell_{\alpha}(X_g)\leq L_g.
\eear
For each $1\leq i \leq (2g-3)$, we set $$S:=\{\alpha;\ \alpha \in \partial \mathcal{P}_j \ \emph{for all $1\leq j \leq i$}\}.$$
It is not hard to see that $X_g \setminus S$ has components containing all the $\mathcal{P}_j$'s where $1\leq j \leq i$ and their complement $X_g\setminus \left(\cup_{1\leq j \leq i} \mathcal{P}_j \right)$. Thus, $S$ divides $X_g$ into at least $(i+1)$ components. By construction we have
\bear\label{ub-L1-2}
\# S \leq 3\cdot i.
\eear
By the definition of $\sL_i(X_g)$ we know that
\bear\label{ub-L1-3}
\sL_i(X_g)\leq \sum_{\alpha \in S}\ell_{\alpha}(X_g).
\eear

Then it follows by \eqref{ub-L1-1}, \eqref{ub-L1-2}, \eqref{ub-L1-3} and Theorem \ref{Bers-c} that
\bear
\sL_i(X_g)\leq 3\cdot i \cdot L_g \leq 78 \cdot i \cdot (g-1).
\eear
Which completes the proof.
\ep

\begin{rem*}
\ben
\item It would be interesting to study the asymptotic behavior of the quantity $\sup_{X_g \in \sM_g}\sL_i(X_g)$, where $1\leq i \leq (2g-3)$, as $g\to \infty$. To our best knowledge, it is even \emph{unkown} for $\sup_{X_g \in \sM_g}\sL_1(X_g)$ as $g\to \infty$. After this article was submitted, recently we show in \cite{NWX20} that $\sup_{X_g \in \sM_g}\sL_1(X_g)\leq C \ln(g)$ for all $g\geq 2$ and some universal constant $C>0$ independent of $g$.

\item It is known \cite[Theorem 5.1.4]{Buser10} that the Bers constant $L_g\geq \sqrt{6g}-2$. As $g\to \infty$, the asymptotic behavior of $L_g$ is still \emph{unknown}. The upper bounds in \eqref{SWY-eigen} of Schoen-Yau-Wolpert depend on $L_g$.  
\een
\end{rem*}

%%%%%%%%%%%%%%%%
\subsection{Eigenvalues}
Let $X_g$ be a closed \RS \ of genus $g\geq 2$ which can also be viewed as a hyperbolic metric on $X_g$. Let $\Delta$ be the Laplacian with respect to this metric. A number $\lambda$ is called an \emph{eigenvalue} if $\Delta f+ \lambda \cdot f=0$ on $X_g$ for some non-zero function $f$ on $X_g$. And the corresponding function $f$ is called an \emph{eigenfunction}. It is known that the set of eigenvalues is an infinite sequence of non-negative numbers
\[0=\lambda_0(X_g)<\lambda_1(X_g)\leq \lambda_2(X_g) \leq \cdots.\]
Let $\{f_i\}_{i \geq 0}$ be its corresponding orthonormal sequence of eigenfunctions. Clearly $f_0$ is the constant function $\frac{1}{\sqrt{4\pi(g-1)}}$. The mini-max principle tells that for any integer $k\geq 0$,
\beqar \label{lamb-k}
\lambda_k(X_g)=\inf_{}\{ \frac{\int_{X_g}|\nabla f|^2}{\int_{X_g}f^2}; \ 0\neq f \in H^1(X_g) \ \text{and} \ \int_{X_g}f\cdot f_i=0 \ \forall i \in [0, k-1]\}
\eeqar
where $H^1(X_g)$ is the completion under the $H^1$-norm of the space of smooth functions on $X_g$. One may see \cite{Chavel} for more details.

%%%%%%%%%%%%%

\subsection{Energy bounds on collars}
In this subsection we recall several useful energy bounds in \cite{DR86} of certain functions on collars. And we will make a little modification for one of them.

The first one is a special case of \cite[Lemma 1]{DR86} for $n=2$.
\begin{lemma}\cite[Lemma 1]{DR86} \label{eigen collar1}
Let $X_g \in \sM_g$ be a hyperbolic surface and $T=[-w,w]\times \mathbb S^1$ be a collar of $X_g$. Let $\lambda_1(T)$ be the first Dirichlet eigenvalue for $T$. Then
$$\lambda_1(T)>\frac{1}{4}.$$
\end{lemma}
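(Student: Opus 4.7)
The plan is to reduce the question to a one-dimensional weighted Hardy-type inequality via Fourier decomposition in the $\mathbb{S}^1$ direction, and then conclude by compactness of $T$.

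First, I would fix $f\in C_c^\infty(T)$ with $f|_{\partial T}=0$ and expand $f(\rho,t)=\sum_{n\in\Z}a_n(\rho)e^{2\pi i n t}$. Using the explicit metric \eqref{collar-metric}, the area element is $\ell\cosh\rho\,d\rho\,dt$ and $|\nabla f|^2=|\partial_\rho f|^2+(\ell\cosh\rho)^{-2}|\partial_t f|^2$, so both the Dirichlet energy and the $L^2$ norm split as sums over $n$. Hence it suffices to prove, for each $n\in\Z$ and each smooth $a\colon[-w,w]\to\C$ with $a(\pm w)=0$, that
\[
\int_{-w}^{w}\!\Bigl[|a'(\rho)|^2+\frac{(2\pi n)^2}{\ell^2\cosh^2\rho}|a(\rho)|^2\Bigr]\cosh\rho\,d\rho \;\geq\; \frac14\int_{-w}^{w}|a(\rho)|^2\cosh\rho\,d\rho.
\]
Since the $n\neq 0$ contribution on the left is non-negative, the worst case is $n=0$, and this is the heart of the matter.

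For the $n=0$ mode I would use the substitution $b(\rho):=a(\rho)\sqrt{\cosh\rho}$. A direct computation gives
\[
(b')^2=(a')^2\cosh\rho+aa'\sinh\rho+\frac{a^2\sinh^2\rho}{4\cosh\rho}.
\]
Integrating over $[-w,w]$ and applying integration by parts to the middle term (the boundary contributions vanish because $a(\pm w)=0$), one obtains
\[
\int_{-w}^{w}aa'\sinh\rho\,d\rho=-\frac12\int_{-w}^{w}a^2\cosh\rho\,d\rho.
\]
Rearranging, and using $2\cosh^2\rho-\sinh^2\rho=\cosh^2\rho+1$, yields the identity
\[
\int_{-w}^{w}(a')^2\cosh\rho\,d\rho-\frac14\int_{-w}^{w}a^2\cosh\rho\,d\rho=\int_{-w}^{w}(b')^2\,d\rho+\frac14\int_{-w}^{w}\frac{a^2}{\cosh\rho}\,d\rho.
\]
The right-hand side is manifestly $\geq 0$, and it is strictly positive unless $a\equiv 0$ (in which case $b\equiv 0$ and both terms vanish, but that forces $a=0$). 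This proves the sharp pointwise inequality $\int|\nabla f|^2\geq \frac14\int f^2$, with strict inequality whenever $f\not\equiv 0$.

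The last step is to upgrade the strict inequality on the Rayleigh quotient to a strict inequality on $\lambda_1(T)$. Because $T$ is a compact manifold with boundary, the Dirichlet Laplacian on $T$ has discrete spectrum and the infimum in the variational characterization of $\lambda_1(T)$ is attained by a nonzero first eigenfunction $\varphi$. Applying the inequality to $\varphi$ gives $\lambda_1(T)\cdot\int\varphi^2=\int|\nabla\varphi|^2>\frac14\int\varphi^2$, hence $\lambda_1(T)>\frac14$. The one mild subtlety — and the only place one has to be careful — is exactly this final compactness step, since the pointwise bound alone would only give $\lambda_1(T)\geq\tfrac14$; the substitution above is designed so that the excess term $\tfrac14\int a^2/\cosh\rho$ appears explicitly and prevents the minimizer from saturating the bound.
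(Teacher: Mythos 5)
Your argument is correct, and the reduction (separation of variables in $t$, reduction to the $n=0$ mode, and the substitution $b=a\sqrt{\cosh\rho}$ to produce a Hardy-type identity) is the standard way to prove this; the paper itself only cites Dodziuk--Randol and does not reprove the lemma. Two small remarks. First, your phrase ``with strict inequality whenever $f\not\equiv 0$'' technically needs the observation that the same identity applied to each Fourier mode $a_n$, $n\neq 0$, also produces the positive term $\tfrac14\int |a_n|^2/\cosh\rho\,d\rho$; otherwise a function with $a_0\equiv 0$ would not be covered by the $n=0$ discussion alone. This is immediate from your own computation, so it is a gap in exposition rather than in substance. Second, you can avoid the compactness/attainment step entirely: since $\cosh\rho\leq\cosh w$ on $T$, your identity already gives the quantitative bound
\begin{equation*}
\int_T |\nabla f|^2 \;\geq\; \frac14\int_T f^2 \;+\; \frac14\int_T \frac{f^2}{\cosh^2\rho} \;\geq\; \frac14\Bigl(1+\frac{1}{\cosh^2 w}\Bigr)\int_T f^2 ,
\end{equation*}
valid for all $f\in H^1_0(T)$ by density, which yields $\lambda_1(T)\geq\tfrac14\bigl(1+\operatorname{sech}^2 w\bigr)>\tfrac14$ directly. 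That said, your compactness argument is also perfectly sound.
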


The second one is as follows.
\begin{lemma}\cite[Lemma 3]{DR86} \label{eigen collar3}
Let $X_g \in \sM_g$ be a hyperbolic surface, and $T=[-w,w]\times \mathbb S^1$ be a collar of $X_g$ with center closed geodesic of length $\ell$. Denote by $\Gamma_1$ and $\Gamma_2$ the two boundary components of $T$, which are topologically circles. Suppose a smooth function $f$ on $T$ satisfies
\begin{equation}
\min_{(x,x^*)\in \Gamma_1 \times \Gamma_2} |f(x)-f(x^*)| = c \geq 0  \nonumber
\end{equation}
where $x^*\in \Gamma_2$ is the reflection of $x\in \Gamma_1$ through the center closed geodesic. Then
\begin{equation}
\int_T |\nabla f|^2 \geq \frac{c^2}{4} \ell.  \nonumber
\end{equation}
\end{lemma}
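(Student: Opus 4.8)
The plan is to bound $\int_T|\nabla f|^2$ from below by discarding the angular part of the gradient and integrating the radial part along the segments $\rho\mapsto(\rho,t)$. Recall from \eqref{collar-metric} that in the coordinates $(\rho,t)\in[-w,w]\times\mathbb S^1$ the metric is $d\rho^2+\ell^2\cosh^2\rho\,dt^2$, so the area element is $\ell\cosh\rho\,d\rho\,dt$ and pointwise
\[
|\nabla f|^2=(\partial_\rho f)^2+\frac{1}{\ell^2\cosh^2\rho}(\partial_t f)^2\ \geq\ (\partial_\rho f)^2 .
\]
Consequently
\[
\int_T|\nabla f|^2\ \geq\ \int_0^1\!\!\int_{-w}^{w}(\partial_\rho f(\rho,t))^2\,\ell\cosh\rho\,d\rho\,dt .
\]

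Next I would fix $t\in\mathbb S^1$ and note that the radial path $\rho\mapsto(\rho,t)$ runs from the point $x=(-w,t)\in\Gamma_1$ to its reflection $x^{\ast}=(w,t)\in\Gamma_2$ through the central geodesic $\{\rho=0\}$; this is exactly the pairing in the hypothesis, so $|f(x)-f(x^{\ast})|\geq c$. By the fundamental theorem of calculus and Cauchy--Schwarz,
\[
c^{2}\ \leq\ \Big(\int_{-w}^{w}\partial_\rho f(\rho,t)\,d\rho\Big)^{2}\ \leq\ \Big(\int_{-w}^{w}(\partial_\rho f)^{2}\,\ell\cosh\rho\,d\rho\Big)\Big(\int_{-w}^{w}\frac{d\rho}{\ell\cosh\rho}\Big).
\]
The second factor is elementary and, crucially, bounded independently of $w$: since $\frac{d}{d\rho}\arctan(\sinh\rho)=\frac{1}{\cosh\rho}$,
\[
\int_{-w}^{w}\frac{d\rho}{\ell\cosh\rho}=\frac{2}{\ell}\arctan(\sinh w)\ <\ \frac{\pi}{\ell}.
\]
Hence for every $t$ we obtain $\int_{-w}^{w}(\partial_\rho f(\rho,t))^{2}\,\ell\cosh\rho\,d\rho\geq \frac{c^{2}\ell}{\pi}$.

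Integrating this last inequality over $t\in\mathbb S^1$, which has total length $1$, yields
\[
\int_T|\nabla f|^2\ \geq\ \frac{c^{2}\ell}{\pi}\ \geq\ \frac{c^{2}}{4}\,\ell ,
\]
the final step because $\pi<4$; this is in fact marginally stronger than the claimed bound. I do not expect any genuine obstacle here: the only things to get right are that the hypothesis is phrased precisely for reflection-paired boundary points, which is what makes the radial segments the correct test curves, and the fact that $\int_{-\infty}^{\infty}\frac{d\rho}{\cosh\rho}=\pi$, which is what keeps the constant uniform over collars of arbitrary width.
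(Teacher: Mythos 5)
Your argument is correct and, modulo the choice of estimate on the weight integral, is the standard one used in Dodziuk--Randol (the paper cites \cite{DR86} for this lemma without reproducing the proof): drop the angular term from $|\nabla f|^2$, apply the fundamental theorem of calculus and Cauchy--Schwarz along each radial segment $\rho\mapsto(\rho,t)$ joining a reflection-pair, and use that $\int_{-w}^{w}\frac{d\rho}{\cosh\rho}$ is bounded independently of $w$. Your evaluation $\int_{-w}^{w}\frac{d\rho}{\cosh\rho}=2\arctan(\sinh w)<\pi$ actually gives the marginally sharper constant $\frac{c^2\ell}{\pi}$; the stated $\frac{c^2\ell}{4}$ follows since $\pi<4$ (or, equivalently, from the cruder bound $\frac{1}{\cosh\rho}\le 2e^{-|\rho|}$ giving $\int\frac{d\rho}{\cosh\rho}<4$).
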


The third one is a slightly different version of \cite[Lemma 2]{DR86}.
\begin{lemma}\label{eigen collar2}
Let $X_g \in \sM_g$ be a hyperbolic surface, and $T=[-w,w]\times \mathbb S^1$ be a collar of $X_g$ with shell $S=[-w-1,-w]\times \mathbb S^1 \cup [w,w+1]\times \mathbb S^1$. Let $\delta$ be a constant with $0<\delta <\frac{1}{16}$, $c>0$ be a constant and $f$ be a smooth function on $T\cup S$ satisfying:
\ben
\item  $\int_{T} |f|^2 \geq c >0$,
\item $\int_{S} |f|^2 \leq \delta c$,
\item $\int_{S} |\nabla f|^2 \leq \delta c$.
\een
Then
\begin{equation}
\int_{T} |\nabla f|^2 \geq \frac{1-16\delta}{4} c. \nonumber
\end{equation}
\end{lemma}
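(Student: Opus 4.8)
The plan is to follow the strategy of \cite[Lemma 2]{DR86} while tracking the explicit constant $16\delta$. The starting point is Lemma \ref{eigen collar1}: since $T$ is a collar, its first Dirichlet eigenvalue exceeds $\tfrac14$, so for any $u \in H^1_0(T)$ one has $\int_T |\nabla u|^2 \geq \tfrac14 \int_T u^2$. The function $f$ is not compactly supported in $T$, so the first step is to cut it off. I would choose a Lipschitz function $\chi$ on $T \cup S$ which equals $1$ on $T$, equals $0$ on the outer boundary of $S$, and interpolates linearly in the $\rho$-coordinate across the two annuli making up $S$, so that $|\nabla \chi| \leq 1$ on $S$ and $\chi$ is supported away from $\partial(T\cup S)$. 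Then $\chi f \in H^1_0(T \cup S)$, and applying Lemma \ref{eigen collar1} (on the slightly larger collar $T \cup S$, which is still a collar, hence still has first Dirichlet eigenvalue $> \tfrac14$) gives
\[
\int_{T \cup S} |\nabla(\chi f)|^2 \;\geq\; \frac14 \int_{T \cup S} (\chi f)^2 \;\geq\; \frac14 \int_T f^2 \;\geq\; \frac{c}{4}.
\]

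The second step is to compare $\int_{T\cup S}|\nabla(\chi f)|^2$ with $\int_T |\nabla f|^2$ using hypotheses (ii) and (iii). On $T$ we have $\chi \equiv 1$, so the contribution there is exactly $\int_T |\nabla f|^2$. On $S$, expand $\nabla(\chi f) = \chi \nabla f + f \nabla \chi$ and bound the $L^2$ norm by the triangle inequality: $\|\nabla(\chi f)\|_{L^2(S)} \leq \|\nabla f\|_{L^2(S)} + \|f \nabla \chi\|_{L^2(S)} \leq \sqrt{\delta c} + \sqrt{\delta c} = 2\sqrt{\delta c}$, using (iii), $|\nabla\chi|\le 1$, and (ii). Hence $\int_S |\nabla(\chi f)|^2 \leq 4\delta c$. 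Combining,
\[
\int_T |\nabla f|^2 \;=\; \int_{T\cup S}|\nabla(\chi f)|^2 - \int_S |\nabla(\chi f)|^2 \;\geq\; \frac{c}{4} - 4\delta c \;=\; \frac{1-16\delta}{4}\,c,
\]
which is exactly the claimed bound; the restriction $\delta < \tfrac1{16}$ is just what makes the right-hand side positive.

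The only subtlety, and the point I would be most careful about, is the first step: checking that $T \cup S$ is genuinely a collar in the sense of Lemma \ref{collar}/Lemma \ref{eigen collar1}, i.e.\ that it embeds in $X_g$ as a collar of the same central geodesic and thus inherits the eigenvalue bound $\lambda_1 > \tfrac14$. In the intended application (via Definition \ref{def thick-thin} and Lemma \ref{thick-thin}(ii)) the collar $T$ together with its shell $S$ still lies inside the maximal collar of $\gamma_i$, so this holds; I would state the lemma with $T \cup S$ understood to be a collar. An alternative that avoids enlarging the collar is to apply Lemma \ref{eigen collar1} directly to $T$ after extending $\chi f|_T$ by a suitable cutoff supported in a one-sided neighborhood inside $T$, but the version above is cleaner and matches how the lemma is used later.
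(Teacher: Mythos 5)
Your proof is correct and is essentially the same as the paper's: your cutoff $\chi$ (linear in $\rho$ across $S$, equal to $1$ on $T$) is exactly the paper's factor $(w+1-|\rho|)$, so $\chi f$ is the paper's function $F$; both apply Lemma \ref{eigen collar1} to $T\cup S$ and bound $\int_S|\nabla(\chi f)|^2\le 4\delta c$ (you via the $L^2$ triangle inequality, the paper via the pointwise Cauchy--Schwarz inequality $(a+b)^2\le 2a^2+2b^2$, yielding the same constant). The subtlety you flag about $T\cup S$ being a collar is likewise handled implicitly in the paper, with Lemma \ref{thick-thin}(ii) guaranteeing it in the application.
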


\begin{proof}
We follow the argument in the proof of \cite[Lemma 2]{DR86}. First recall that the Collar Lemma \ref{collar} tells that one may assume $(\rho,t)\in [-w-1,w+1]\times \mathbb S^1$ is a coordinate on $T\cup S$. And the hyperbolic metric on $T\cup S$ is
$$ds^2=d\rho^2 + \ell^2 \cosh^2\rho dt^2$$
where $\ell$ is the length of center closed geodesic. In this coordinate we define a function $F$ on $T\cup S$ as
\begin{equation}
F(\rho,t) :=
\begin{cases}
f(\rho,t) & \text{if}\ |\rho|\leq w, \\
(w+1-|\rho|)f(\rho,t) & \text{if}\ |\rho|\geq w.
\end{cases} \nonumber
\end{equation}
By definition $F|_{\partial (T\cup S)} =0$. Then it follows by Lemma \ref{eigen collar1} that
\begin{equation}\label{c-in-1}
\int_{T\cup S}|\nabla F|^2 > \frac{1}{4}\int_{T\cup S}F^2. 
\end{equation}

\noindent It is clear that $|\nabla (w+1-|\rho|)|^2 =|\nabla |\rho||^2= 1$ on $S$. So we have
\bear \label{c-in-2}
\int_S |\nabla F|^2 &=& \int_S |\nabla (w+1-|\rho|) \cdot f(\rho,t)+(w+1-|\rho|) \cdot \nabla f(\rho,t)|^2 \\
&\leq &  \int_S (|f(\rho,t)|+(w+1-|\rho|) \cdot |\nabla f(\rho,t)|)^2 \nonumber\\
&\leq& 2\int_S f^2 + 2\int_S |\nabla f|^2 \quad \emph{(by Cauchy-Schwarz inequality)} \nonumber\\
& \leq& 4\delta c.  \nonumber
\eear
Where assumption $(2)$ and $(3)$ are applied in the last inequality. Thus, it follows by \eqref{c-in-1} and \eqref{c-in-2} that
\bear \label{c-in-3}
\int_{T} |\nabla f|^2
& =& \int_{T} |\nabla F|^2 \\
& \geq& \frac{1}{4}\int_{T\cup S}F^2 - \int_S |\nabla F|^2 \nonumber\\
& \geq &\frac{1}{4}c - 4\delta c \quad \quad \emph{(by assumption $(1)$)} \nonumber\\
& = &\frac{1-16\delta}{4} c. \nonumber
\eear
Which completes the proof.
\end{proof}

%%%%%%%%%%%%%%%%%%%%%%%%%%%%%%%%%%%%%%%%%%%%%%%%%%%%%%%%%%%%%%%%%%%%%%%%%%%%%%%%%%%%

\section{Uniform gaps for eigenfunctions}\label{sec-gap}
Recall that the \emph{Cheeger isoperimetric constant} $h(X_g)$ is defined as
$$h(X_g):= \inf \frac{\length(\Gamma)}{\min \{\Vol(A_1),\Vol(A_2)\}}$$
where the infimum is taken over all smooth curves $\Gamma$ which divide $X_g$ into two pieces $A_1$ and $A_2$.

\begin{lemma}[Cheeger inequality, \cite{Che70}]\label{Cheeger}
Then
$$\lambda_1(X_g) \geq \frac{1}{4}h^2(X_g).$$
\end{lemma}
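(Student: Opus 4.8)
The plan is to run the classical argument of Cheeger, adapted to the surface $X_g$. Let $f$ be a (necessarily smooth) eigenfunction associated with $\lambda_1(X_g)$. Since $f$ is $L^2$-orthogonal to the constants, $\int_{X_g}f=0$, so $f$ changes sign; write $X_g^+=\{f>0\}$ and $X_g^-=\{f<0\}$, both of positive volume. After replacing $f$ by $-f$ if necessary we may assume $\Omega:=X_g^+$ satisfies $\Vol(\Omega)\le\tfrac12\Vol(X_g)$. Set $u:=\max\{f,0\}$, a Lipschitz function supported in $\overline\Omega$ with $\nabla u=\mathbf 1_\Omega\cdot\nabla f$ almost everywhere. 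Because $u$ vanishes on the nodal set $\partial\Omega$ and $\Delta f+\lambda_1(X_g)f=0$ on $\Omega$, integration by parts (the boundary term drops since $f=0$ on $\partial\Omega$) gives
\[
\int_{X_g}|\nabla u|^2=\int_\Omega|\nabla f|^2=-\int_\Omega f\,\Delta f=\lambda_1(X_g)\int_\Omega f^2=\lambda_1(X_g)\int_{X_g}u^2,
\]
so $u$ realizes the Rayleigh quotient $\lambda_1(X_g)$.

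The key step is to bound $\int_{X_g}|\nabla(u^2)|$ from above and below. From above, since $\nabla(u^2)=2u\,\mathbf 1_\Omega\nabla f$, the Cauchy--Schwarz inequality and the identity above give
\[
\int_{X_g}|\nabla(u^2)|=2\int_\Omega u\,|\nabla f|\le 2\Big(\int_\Omega f^2\Big)^{1/2}\Big(\int_\Omega|\nabla f|^2\Big)^{1/2}=2\sqrt{\lambda_1(X_g)}\int_{X_g}u^2.
\]
From below, the co-area formula yields $\int_{X_g}|\nabla(u^2)|=\int_0^\infty\length(\partial\Omega_t)\,dt$, where $\Omega_t:=\{u^2>t\}$. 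For every $t>0$ we have $\Omega_t\subset\Omega$, hence $\Vol(\Omega_t)\le\Vol(\Omega)\le\tfrac12\Vol(X_g)$; so for a.e.\ $t$ the smooth curve $\partial\Omega_t$ divides $X_g$ into $\Omega_t$ and its complement, the smaller having volume $\Vol(\Omega_t)$, and the definition of $h(X_g)$ gives $\length(\partial\Omega_t)\ge h(X_g)\Vol(\Omega_t)$. Combined with the layer-cake identity $\int_0^\infty\Vol(\Omega_t)\,dt=\int_{X_g}u^2$, this gives $\int_{X_g}|\nabla(u^2)|\ge h(X_g)\int_{X_g}u^2$. Comparing the two bounds and dividing by $\int_{X_g}u^2>0$ yields $h(X_g)\le 2\sqrt{\lambda_1(X_g)}$, i.e.\ $\lambda_1(X_g)\ge\tfrac14 h^2(X_g)$.

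The step requiring the most care is the legitimacy of testing with the half $u=f^+$ on the nodal domain, i.e.\ the identity $\int_\Omega|\nabla f|^2=\lambda_1(X_g)\int_\Omega f^2$; this rests on the nodal set $\{f=0\}$ having measure zero and on the integration by parts producing no boundary contribution because $f$ vanishes there. On a surface this is immediate from the local structure of nodal lines of eigenfunctions, but it can also be sidestepped by a routine approximation: work on $\{f>\epsilon\}$ with test function $(f-\epsilon)^+$, invoke Sard's theorem so that $\partial\{f>\epsilon\}$ is smooth for a.e.\ $\epsilon$, and let $\epsilon\downarrow 0$. The remaining ingredients—the co-area formula for the Lipschitz function $u^2$ and the a.e.\ regularity of the level curves $\partial\Omega_t$—are standard.
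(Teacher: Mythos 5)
The paper does not prove this lemma; it is stated with a citation to Cheeger's original article and used as a black box. Your proof is the standard argument: take $u=f^+$ for the first eigenfunction $f$, verify $\int|\nabla u|^2=\lambda_1\int u^2$ by integration by parts against the nodal set, then sandwich $\int|\nabla(u^2)|$ between $2\sqrt{\lambda_1}\int u^2$ (Cauchy--Schwarz) and $h(X_g)\int u^2$ (co-area formula plus the isoperimetric definition of $h$, using that the superlevel sets of $u^2$ sit inside the smaller nodal domain). All steps are correct, including the Sard-type approximation you flag as a safeguard for the integration-by-parts identity, so this is a complete and accurate proof of the cited result rather than an alternative to an argument the paper gives.
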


Let $\Gamma$ be a set of smooth curves dividing $X_g$ into two disjoint pieces $A_1$ and $A_2$. Then $\Gamma$ must be one of the following three cases:

$(a)$. $\Gamma$ contains a simple closed curve bounding a disk $D$ in $X_g$.

$(b)$. $\Gamma$ contains two simple closed curves $\tau$ and $\gamma$ which bounds a cylinder $T$ in $X_g$.

$(c)$. $\Gamma$ is not of type (a) and (b). That is, no two pairwise simple closed curves in $\Gamma$ are homotopic, and no simple closed curve in $\Gamma$ is homotopically trivial. In particular, $\length(\Gamma) \geq \mathcal L_1(X_g)$.

For cases $(a)$ and $(b)$, it follows by elementary isoperimetric inequalities that $\length(\Gamma) \geq \Vol (D)$ and  $\length(\Gamma) \geq \Vol (T)$. For case $(c)$, we have $\frac{\length(\Gamma)}{\min \{\Vol(A_1),\Vol(A_2)\}} \geq \frac{\mathcal L_1(X_g)}{\Vol(X_g)}$. Thus it follows by the Cheeger inequality that
\begin{equation}\label{l_1^2/g^2}
\lambda_1 (X_g) \geq \min\{\frac{1}{4},\frac{\mathcal L_1(X_g)^2}{4\Vol(X_g)^2}\}.
\end{equation}
One may see the proof of \cite[Proposition 9]{WX18} for more details on \eqref{l_1^2/g^2}.\\

In light of \eqref{l_1^2/g^2}, to prove $\lambda_1(X_g) \geq \min\{\frac{1}{4},c\frac{\mathcal L_1(X_g)}{\Vol(X_g)^2}\}$ we only need to consider the case that $\mathcal L_1(X_g)$ is small. The method in this article is motivated by \cite{DR86} of Dodziuk-Randol, which gave a different proof on the main results of Schoen-Yau-Wolpert in \cite{SWY80}.

For fixed small enough constant $\eps>0$ given by Lemma \ref{thick-thin} (for example $\eps=0.05$), we always assume \bear \mathcal L_1(X_g) \leq \eps.\eear In particular the modified thin part $\mathcal{B}$ defined in Section \ref{thick-thin decomp} is non-empty.

Let $\vph$ be an eigenfunction with respect to the first eigenvalue $\lambda_1$ on $X_g$ such that
$$\int_{X_g}\vph^2 = 1.$$

Denote the components of the modified thick part $\sA\subset X_g$ by $M_1,...,M_m$ where $m>0\in \Z$. For each integer $i \in [1,m]$ we set
$$\widehat{M_i} = \{p\in X_g ; \ \dist(p,M_i)\leq \eps\}.$$
When $\eps$ is small enough, these subsets $\widehat{M_1},...,\widehat{M_m}$ are still pairwise disjoint.

\begin{definition}
The \emph{oscillation} of $\vph$ on each component $M_i$, denoted by $\Osc(i)$, is defined as
\[\Osc(i):=\max_{x \in M_i}\vph(x)-\min_{x \in M_i}\vph(x).\]
\end{definition}

We have the following bound for $\sum_{i=1}^m \Osc(i)$ when $\lambda_1(X_g)$ is small.

\begin{proposition}\label{oscillation}
Let $X_g \in \sM_g$ be a hyperbolic surface with $\lambda_1(X_g) \leq \frac{1}{4}$, and $\vph$ be an eigenfunction with respect to $\lambda_1(X_g)$ with
$\int_{X_g}\vph^2 = 1$. Then there exists a constant $c=c(\eps)>0$ only depending on $\eps$ such that 
$$\sum_{i=1}^m \Osc(i) \leq c \sqrt{\Vol(X_g)\lambda_1(X_g)}.$$
\end{proposition}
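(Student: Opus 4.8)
The plan is to control the oscillation on each component $M_i$ of the modified thick part by the local Dirichlet energy of $\vph$ there, then sum up and apply Cauchy–Schwarz together with the eigenvalue equation. The key heuristic is that on each $M_i$ the injectivity radius is bounded below by $\eps$ (Lemma \ref{thick-thin}(iv)), so $M_i$ is, at every point, locally a hyperbolic disk of fixed radius $\eps$ in which Harnack-type / gradient estimates for $\vph$ hold with constants depending only on $\lambda_1(X_g)\le \tfrac14$ and $\eps$; and the total area $\Vol(X_g)$ caps how many such disks one needs to chain together.

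First I would establish a local estimate: for each component $M_i$, pick $x, y \in M_i$ realizing the oscillation $\Osc(i) = \vph(x) - \vph(y)$. Using Lemma \ref{thick-thin}(v) I can connect $x$ and $y$ inside $\widehat{M_i}$ (or inside $M_i$ itself, up to a factor $5$ in length) by a path, which I cover by a chain of hyperbolic balls of radius $\eps$ all lying in $\widehat{M_i}$; the number of balls needed is at most a constant times $\frac{\length}{\eps}$, and on a collar-free region of area at most $\Vol(X_g)$ this gives a bound on the number in terms of $\Vol(X_g)$. On each ball $B$ of radius $\eps$, since $\Delta\vph = -\lambda_1\vph$ with $\lambda_1\le\tfrac14$, elliptic estimates (or, more elementarily, the mean value / Moser iteration argument on a fixed-size hyperbolic ball) give $\osc_{B}\vph \le C(\eps)\big(\int_{2B}|\nabla\vph|^2\big)^{1/2} + C'(\eps)\big(\int_{2B}\vph^2\big)^{1/2}$, and the $\vph^2$ term can be absorbed since $\Delta\vph=-\lambda_1\vph$ lets one bootstrap, or it can simply be carried along. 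Summing over the chain, $\Osc(i) \le C(\eps)\sum_{B} \big(\int_{B}|\nabla\vph|^2 + \vph^2\big)^{1/2}$ over boundedly-overlapping balls contained in $\widehat{M_i}$.

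Next I would sum over $i$. Because the $\widehat{M_i}$ are pairwise disjoint and the balls within each have bounded overlap, $\sum_i\sum_{B\subset\widehat{M_i}}\big(\int_B |\nabla\vph|^2+\vph^2\big)^{1/2}$ is a sum of $N$ nonnegative terms with $N \le C(\eps)\Vol(X_g)$ (the total number of $\eps$-balls needed), and $\sum_B \big(\int_B|\nabla\vph|^2+\vph^2\big) \le C(\eps)\big(\int_{X_g}|\nabla\vph|^2 + \int_{X_g}\vph^2\big) = C(\eps)(\lambda_1(X_g) + 1) \le 2C(\eps)$ using $\int_{X_g}\vph^2=1$ and $\lambda_1\le\tfrac14$. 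By Cauchy–Schwarz, $\sum_i\Osc(i) \le C(\eps)\sqrt{N}\cdot\sqrt{\lambda_1(X_g)+1}$. To land exactly the claimed bound $c\sqrt{\Vol(X_g)\lambda_1(X_g)}$ rather than $c\sqrt{\Vol(X_g)}$, I would instead arrange the local estimate so that the $\vph^2$-mass is controlled by the gradient-mass: this is the standard fact that on a hyperbolic ball of radius $\eps$ with $\int\vph\,dV$ suitably normalized, $\osc\vph$ is dominated by the $L^2$ norm of $\nabla\vph$ alone — more carefully, one uses that the \emph{first eigenfunction} $\vph$ has mean zero on $X_g$, so a Poincaré-type argument lets the global $\int\vph^2 = 1$ be re-expressed as $\lambda_1^{-1}\int|\nabla\vph|^2$, producing the factor $\sqrt{\lambda_1}$ instead of $1$. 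Concretely, $\int_{X_g}|\nabla\vph|^2 = \lambda_1(X_g)$, and the local oscillation estimate should be phrased purely in terms of $\int_{2B}|\nabla\vph|^2$, which is legitimate once one notes that adding a constant to $\vph$ on the ball does not change $\osc_B\vph$, so only the gradient enters. Then $\sum_i\Osc(i) \le C(\eps)\sqrt{N}\cdot\big(\int_{X_g}|\nabla\vph|^2\big)^{1/2} \le C(\eps)\sqrt{\Vol(X_g)\,\lambda_1(X_g)}$, as desired.

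The main obstacle I anticipate is making the local oscillation-versus-gradient estimate clean and genuinely dependent only on $\eps$ (not on $g$): one must have a uniform Sobolev/Moser constant on every hyperbolic ball of radius $\eps$, which is fine since all such balls are isometric, but care is needed because $M_i$ is not itself a ball — points of $M_i$ near $\partial M_i$ sit inside $\widehat{M_i}$, whose $\eps$-neighborhoods may enter the thin collars; I must check (using Lemma \ref{thick-thin}, especially parts (ii) and (v)) that the chaining path and its $\eps$-balls stay in a region where injectivity radius is $\gtrsim\eps$ so that the local elliptic estimate still applies, or else run the chain through the shells $S_i$ where the geometry is still controlled. A secondary subtlety is the counting of balls: one needs that the total number of radius-$\eps$ balls required to chain through all the $M_i$ is $O(\Vol(X_g))$, which follows from a volume-packing argument (disjoint balls of radius $\eps/2$ each have volume $\ge v(\eps)>0$), but one must ensure the chains are chosen with bounded overlap so that both the count and the energy sum behave. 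Once these uniformities are in place, the Cauchy–Schwarz step and the substitution $\int|\nabla\vph|^2 = \lambda_1$ finish the proof.
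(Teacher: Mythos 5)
Your overall strategy — chain $\eps$-balls along a shortest path inside each $M_i$, apply a local elliptic estimate on each ball that converts oscillation into $L^2$ gradient, count the balls by volume-packing, and finish with Cauchy--Schwarz and $\int_{X_g}|\nabla\vph|^2 = \lambda_1$ — is exactly the route the paper takes, and all the surrounding bookkeeping (injectivity radius $\geq\eps$ on $\sA$ via Lemma~\ref{thick-thin}(iv), disjointness of the $\widehat{M_i}$, bounded overlap of the balls, $\sum_i\Vol(\widehat{M_i})\le\Vol(X_g)$) is correctly identified.

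There is, however, one real gap in how you propose to obtain the key local inequality $\osc_{B}\vph \lesssim_\eps \big(\int_{2B}|\nabla\vph|^2\big)^{1/2}$ \emph{with no $\vph^2$-term}. You argue ``adding a constant to $\vph$ on the ball does not change $\osc_B\vph$, so only the gradient enters,'' but this does not go through as stated: if $u=\vph-\bar\vph_{2B}$, then $\Delta u = -\lambda_1\vph = -\lambda_1 u -\lambda_1\bar\vph_{2B}$, and the source term $\lambda_1\bar\vph_{2B}$ is not controlled by $\|\nabla\vph\|_{L^2(2B)}$. Running elliptic regularity plus Sobolev on $u$ (together with the Poincar\'e inequality for $u$) gives only $\osc_B\vph \lesssim_\eps \|\nabla\vph\|_{L^2(2B)} + \lambda_1\|\vph\|_{L^2(2B)}$. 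Carried through the chaining and the summation over components, the second term contributes an additional $\lambda_1\Vol(X_g)$, which is \emph{not} dominated by $\sqrt{\lambda_1\Vol(X_g)}$ under the hypothesis $\lambda_1\le\frac14$ alone (it is only small under the stronger hypothesis \eqref{lambda assumption} used later, not under the hypothesis of Proposition~\ref{oscillation}). The way the paper closes this gap is to apply the interior Sobolev estimate not to $\vph$ (or $\vph-c$) but to the $1$-form $d\vph$: since $\Delta(d\vph)=\lambda_1\,d\vph$ (same eigenvalue equation), one gets $\|\nabla\vph\|_{L^\infty(B(p;\eps/2))} \le c(\eps)\sum_{j=0}^N\|\Delta^j(d\vph)\|_{L^2(B(p;\eps))} = c(\eps)\sum_{j}\lambda_1^j\|\nabla\vph\|_{L^2(B(p;\eps))} \le \frac43 c(\eps)\|\nabla\vph\|_{L^2(B(p;\eps))}$, and then $|\vph(p_i)-\vph(p_{i+1})|\le\tfrac{\eps}{2}\|\nabla\vph\|_{L^\infty}\lesssim_\eps\|\nabla\vph\|_{L^2(B(p_i;\eps))}$ with no zero-order term at all. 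Everything else in your outline then goes through unchanged. So: same architecture as the paper, but your pure-gradient local estimate needs to be justified via $d\vph$ rather than via subtracting a constant from $\vph$.
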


\begin{proof}
On every component $M_{i_0}$ of $\sA$, we assume $p,q \in M_{i_0}$ with
\[\vph(p)=\max_{x\in M_{i_0}}\vph(x) \quad \emph{and} \quad \vph(q)=\min_{x\in M_{i_0}}\vph(x).\]

\noindent Consider a shortest path $\gamma: [0,l] \to M_{i_0}$ in $M_{i_0}$ connecting $p$ and $q$ with arc-parameter, where $l>0$ is the length of $\gamma$. Let $0=t_1<t_2<...<t_k=l$ be a partition of $[0,l]$ where $k>0\in \Z$ satisfying $t_{i+1}-t_{i}=\frac{1}{2}\eps$ for all $1\leq i\leq k-2$ and $t_k-t_{k-1}\leq \frac{1}{2}\eps$. Let $p_i = \gamma(t_i)$ and consider the embedding balls $B(p_i;\eps)$ centered at $p_i$ of radius $\eps$. By standard Sobolev embeddings \cite{Taylor-book} we know that
\begin{equation}
||\nabla \vph||_{L^\infty(B(p_i; \frac{\eps}{2}))} \leq c(\eps) \sum_{j=0}^N ||\Delta^j (d\vph)||_{L^2(B(p_i;\eps))} \nonumber
\end{equation}
for some integer $N>0$ (we remark here that the Sobolev embedding holds with a uniform constant because it follows by by Lemma \ref{thick-thin} that the injectivity radius satisfies that $\inj(p_i) \geq \eps$). Since $\vph$ is an eigenfunction and $\lambda_1(X_g) <\frac{1}{4}$,
\begin{align}\label{l2-up}
||\nabla \vph||_{L^\infty(B(p_i; \frac{\eps}{2}))}
&\leq c(\eps) \sum_{j=0}^N ||\Delta^j (d\vph)||_{L^2(B(p_i; \eps))} \\
&= c(\eps) \sum_{j=0}^N \lambda_1^j||d\vph||_{L^2(B(p_i; \eps))}\nonumber\\
&\leq \frac{4}{3}c(\eps) ||\nabla\vph||_{L^2(B(p_i; \eps))}. \nonumber
\end{align}
Such an estimation \eqref{l2-up} is standard. One may see \cite[Proposition 2.2]{Lip-Ste-18} for general estimations. Since $p_i$ and $p_{i+1}$ are both contained in the embedding ball $\overline{B(p_i; \frac{\eps}{2})}$, by \eqref{l2-up} we have
\begin{equation}\label{osc-2}
|\vph(p_i)-\vph(p_{i+1})| \leq \frac{2}{3}\eps c(\eps) ||\nabla\vph||_{L^2(B(p_i; \eps))}.
\end{equation}

\noindent Recall that $\widehat{M_{i_0}} = \{p\in X_g ; \ \dist(p,M_{i_0})\leq \eps\}$ and so $$B(p_i; \eps) \sbs \widehat{M_{i_0}}.$$

\noindent Suppose
$$B(p_i; \eps) \cap B(p_j; \eps) \neq \emptyset$$
for some $i\neq j$. By the triangle inequality we have $$\dist(p_i,p_j)< 2\eps.$$ Then it follows by Lemma \ref{thick-thin} that there exists a path $\gamma'\subset M_{i_0}$ connecting $p_i$ and $p_j$ of length $$\ell(\gamma')\leq 10\eps.$$ Since $\gamma$ is a shortest path and $t_{i+1}-t_{i}=\frac{1}{2}\eps$, we have $$|i-j|\leq 21.$$
Where since the last two values satisfies that $t_k-t_{k-1}\leq \frac{\eps}{2}$, we use $21$ instead of $20$ in the inequality above. So we have
\bear \label{osc-3}
B(p_i; \eps) \cap B(p_{i+r};\eps) = \emptyset,\ \ \emph{for all} \ r \geq 22.
\eear

\noindent Which implies that each point in $\widehat{M_{i_0}}$ can be only contained in at most $21$ embedding balls in $\{B(p_i; \eps)\}_{1\leq i\leq k}$. By Lemma \ref{thick-thin} we know that the injectivity radius satisfies that $\inj(x) \geq \eps$ for all $x\in M_{i_0}$. Thus, we have that number $k$ satisfies
\bear \label{osc-1}
k \leq \frac{21}{\Vol (B(\eps))} \Vol (\widehat{M_{i_0}})
\eear
where $\Vol (B(\eps))$, only depending on $\eps$, is the hyperbolic volume of a geodesic ball $B(\eps) \subset \H$ with radius $\eps$. Then it follow by \eqref{osc-2}, \eqref{osc-3}, \eqref{osc-1} and the Cauchy-Schwarz inequality that
\begin{align}
\Osc(i_0)
&= |\vph(p)-\vph(q)|  \\
&\leq \sum_{i=1}^{k-1} |\vph(p_i)-\vph(p_{i+1})|\nonumber \\
&\leq \frac{2}{3}\eps c(\eps) \sum_{i=1}^{k-1} ||\nabla\vph||_{L^2(B(p_i;\eps))}\nonumber \\
&= \frac{2}{3}\eps c(\eps) \sum_{i=1}^{k-1} \sqrt{\int_{B(p_i; \eps)} |\nabla\vph|^2 } \nonumber \\
&\leq \frac{2}{3}\eps c(\eps) \sqrt{k-1} \sqrt{\sum_{i=1}^{k-1}\int_{B(p_i;\eps)} |\nabla\vph|^2 } \nonumber \\
&\leq \frac{14\eps c(\eps)}{\sqrt{\Vol (B(\eps))}} \sqrt{\Vol (\widehat{M_{i_0}})} \sqrt{\int_{\widehat{M_{i_0}}} |\nabla\vph|^2 }. \nonumber
\end{align}

\noindent Since $i_0\in [1,m]$ is arbitrary, by taking a summation we get
\begin{align}
\sum_{i=1}^m \Osc(i)
&\leq \sum_{i=1}^m \frac{14\eps c(\eps)}{\sqrt{\Vol (B(\eps))}} \sqrt{\Vol (\widehat{M_{i}})} \sqrt{\int_{\widehat{M_{i}}} |\nabla\vph|^2}  \\
&\leq \frac{14\eps c(\eps)}{\sqrt{\Vol (B(\eps))}} \sqrt{\sum_{i=1}^m \Vol (\widehat{M_i})} \sqrt{\sum_{i=1}^m \int_{\widehat{M_i}} |\nabla\vph|^2 }\nonumber \\
&\leq \frac{14\eps c(\eps)}{\sqrt{\Vol (B(\eps))}} \sqrt{\Vol (X_g)} \sqrt{\int_{X_g} |\nabla\vph|^2 }\nonumber \\
&= \frac{14\eps c(\eps)}{\sqrt{\Vol (B(\eps))}} \sqrt{\Vol(X_g)\lambda_1(X_g)}. \nonumber
\end{align}

Then the conclusion follows by setting $c(\eps)=\frac{14\eps c(\eps)}{\sqrt{\Vol (B(\eps))}}$.
\end{proof}

Proposition \ref{oscillation} tells that the total oscillations of $\vph$ over components of $\sA$ is small. However, the following two results will tell that the oscillation of $\vph$ on $\sA$ is big if $\lambda_1(X_g)$ is small enough. This roughly tells that the total oscillations of $\vph$ over components of $X_g \setminus \sA$ is big in some sense.

\begin{lemma}\label{sign change}
Let $X_g \in \sM_g$ be a hyperbolic surface with $\lambda_1(X_g) \leq \frac{1}{4}$, and $\vph$ be the eigenfunction with $\Delta \vph+\lambda_1(X_g)\cdot \vph=0$. Then there exist two points $p_1\neq p_2 \in \sA$, where $\sA$ is defined in \eqref{def-sA}, such that
$$\vph(p_1)\cdot \vph(p_2) \leq 0.$$
\end{lemma}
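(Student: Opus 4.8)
The plan is to argue by contradiction: suppose $\vph$ has constant sign on $\sA$, say $\vph \geq 0$ on $\sA$ (the case $\vph \le 0$ being symmetric). Since $\vph$ is a first eigenfunction it is orthogonal to the constants, so $\int_{X_g}\vph = 0$; hence $\vph$ must take negative values somewhere, and by our assumption every point where $\vph<0$ lies in the modified thin part $\sB = \bigcup_i T_i$. I would then localize: there is at least one collar $T_{i_0}$ on which $\vph$ attains a negative value. The strategy is to show that the ``negative mass'' of $\vph$ cannot be confined to the collars $T_i$ without forcing the Rayleigh quotient of a suitable test function to exceed $\lambda_1(X_g)$, contradicting minimality — or, more cleanly, to directly contradict $\lambda_1(X_g)\le \tfrac14$ using the Dirichlet eigenvalue bound on collars.

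Here is the mechanism I expect to use. Consider the negative part $\vph_- := \max\{-\vph,0\}$, which under our assumption is supported in $\overline{\sB}$, so in particular $\vph_-$ vanishes on $\partial T_i$ for each $i$ (since $\partial T_i \subset \sA$ by Lemma \ref{thick-thin}(ii), where $\vph\ge 0$). Thus $\vph_-$ restricted to each collar $T_i$ is an admissible test function for the Dirichlet problem on $T_i$, so Lemma \ref{eigen collar1} gives $\int_{T_i}|\nabla \vph_-|^2 > \tfrac14 \int_{T_i}\vph_-^2$. Summing over $i$ and using that $\vph_-\equiv 0$ off $\bigcup_i T_i$,
\[
\int_{X_g}|\nabla \vph_-|^2 \;>\; \frac14 \int_{X_g}\vph_-^2.
\]
On the other hand, a standard computation using $\Delta\vph + \lambda_1\vph = 0$ shows that $\vph_-$ is itself a valid competitor with Rayleigh quotient at most $\lambda_1$: multiplying the eigenvalue equation by $\vph_-$ and integrating by parts over $X_g$ (legitimate since $\vph_-\in H^1$), one gets $\int_{X_g}|\nabla\vph_-|^2 = \lambda_1(X_g)\int_{X_g}\vph_-^2 \le \tfrac14 \int_{X_g}\vph_-^2$, provided $\vph_-\not\equiv 0$. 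This contradicts the strict inequality above. Hence $\vph_-\equiv 0$, i.e. $\vph \ge 0$ everywhere on $X_g$, which together with $\int_{X_g}\vph=0$ forces $\vph\equiv 0$, contradicting that $\vph$ is an eigenfunction. Therefore $\vph$ must change sign on $\sA$, giving the desired $p_1,p_2$.

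The one subtlety I want to flag, which is the main technical point, is the integration-by-parts identity $\int_{X_g}|\nabla\vph_-|^2 = \lambda_1\int_{X_g}\vph_-^2$. This is the standard fact that the positive and negative parts of an eigenfunction each realize the eigenvalue only when one of them is identically zero; concretely, on the open set $\{\vph<0\}$ we have $\Delta\vph_- = -\Delta\vph = \lambda_1\vph = -\lambda_1\vph_-$, and $\vph_-$ has zero boundary values on the boundary of this set, so Green's identity applies without boundary terms. One should be slightly careful that $\{\vph<0\}$ may not have smooth boundary, but this is handled routinely by Sard's theorem (approximating by superlevel sets $\{\vph < -\eta\}$ for regular values $\eta$) or by the well-known Kato-type inequality for $|\nabla\vph_-|$. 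I would write this step out carefully since the referee specifically flagged the original statement of this lemma; everything else is a direct application of Lemma \ref{eigen collar1} and the thick–thin structure from Lemma \ref{thick-thin}.
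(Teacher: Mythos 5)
Your proof is correct and rests on the same two ingredients as the paper's: Lemma \ref{eigen collar1} (the collar Dirichlet eigenvalue exceeds $\tfrac14$), and the fact that the restriction of $\vph$ to its negative set has Rayleigh quotient exactly $\lambda_1(X_g)\le\tfrac14$. The only genuine difference is where you localize. The paper picks one collar $T$ on which $\vph$ turns negative, takes a single nodal domain $T'\subset T$, and builds the test function $\widetilde\vph=\vph\cdot\mathbf{1}_{T'}$; this forces it to invoke analyticity of $\vph$ (via Cheng) so that $\partial T'$ is regular enough to apply Stokes on $T'$. You instead work globally with $\vph_-=\max\{-\vph,0\}$, which is supported in $\sB$, vanishes on each $\partial T_i\subset\sA$, and is an admissible Dirichlet competitor on every collar simultaneously. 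The payoff of your route is that the identity $\int_{X_g}|\nabla\vph_-|^2=\lambda_1\int_{X_g}\vph_-^2$ follows from integration by parts on the \emph{closed} surface $X_g$ (no boundary terms at all) plus the a.e.\ pointwise facts $\nabla\vph_-=-\nabla\vph\cdot\mathbf{1}_{\{\vph<0\}}$ and $\vph\,\vph_-=-\vph_-^2$; the nodal-set regularity issue you flag does not actually arise and you need neither Sard nor a Kato inequality. One small bookkeeping remark: the negation of the lemma gives $\vph>0$ (strictly) on $\sA$ or $\vph<0$ (strictly) on $\sA$ — if $\vph$ vanished at some $p_1\in\sA$ you would already be done by pairing with any other $p_2\in\sA$ — so writing ``$\vph\ge0$ on $\sA$'' is a harmless but slightly imprecise starting assumption. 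Also, as you note, the strict inequality from Lemma \ref{eigen collar1} needs $\vph_-|_{T_i}\not\equiv0$ on at least one collar, which holds because $\vph_-\equiv0$ together with $\int_{X_g}\vph=0$ would force $\vph\equiv0$.
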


\begin{proof}
Suppose for contradiction that $\vph < 0$ or $\vph>0$ (globally) on $\sA$. Without loss of generality we assume that $\vph>0$ on $\sA$; otherwise one may replace $-\vph$ by $\vph$. Since $\int_{X_g} \vph = 0$, there exists a collar $T\subset X_g$ such that $\min_{p\in T}\vph(p)<0$. By assumption we know that $\vph>0$ on the boundary of $T$. Thus, the nodal set $\{\vph=0\}$ bounds at least one non-empty subset $T'$ of $T\subset X_g$. By the analyticity of eigenfunction, one may assume that the boundary $\partial T'$ of $T'$ is smooth (\textsl{e.g.} see \cite{Cheng76}). Then by the Stokes' Theorem we have
$$\int_{T'} |\nabla \vph|^2 =\lambda_1(X_g) \cdot \int_{T'}  \vph^2.$$

\noindent Set
\[\widetilde{\vph}:=
\begin{cases}
\vph & \text{on}\ T', \\
0 & \text{on}\ T\setminus T' .
\end{cases}
\]
As $\widetilde{\vph}$ vanishes on $\partial T$, it follows by Lemma \ref{eigen collar1} that
$$\frac{1}{4} < \lambda_1(T) \leq \frac{\int_{T} |\nabla \widetilde{\vph}|^2}{\int_{T} |\widetilde{\vph}|^2} = \lambda_1(X_g).$$
Which is a contradiction.
\end{proof}

\begin{rem*}
Lemma \ref{sign change} implies that $$\max_{x\in \sA}\vph(x) - \min_{x\in \sA}\vph(x) \geq \sup_{x\in \sA}|\vph(x)|.$$
\end{rem*}

We next show that if $\lambda_1(X_g)$ is small enough, then the magnitude of the eigenfunction on $\sA$ has a uniform positive lower bound in term of the genus. More precisely,
\begin{proposition}\label{max thick part}
Let $X_g \in \sM_g$ be a hyperbolic surface with $$\lambda_1(X_g) \leq \frac{1}{1000}\frac{1}{\Vol(X_g)}.$$ And let $\vph$ be eigenfunction with respect to $\lambda_1(X_g)$ with
$\int_{X_g}\vph^2 = 1$. Then
$$\sup_{x\in \sA}|\vph(x)|\geq \frac{1}{32\sqrt{\Vol(X_g)}}.$$
\end{proposition}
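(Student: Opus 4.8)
The plan is to argue by contradiction: suppose $\sup_{x\in\sA}|\vph(x)| < \frac{1}{32\sqrt{\Vol(X_g)}}$. The strategy is to show that almost all of the $L^2$-mass of $\vph$ must then concentrate in the modified thin part $\sB=\bigcup_i T_i$, and then to use Lemma~\ref{eigen collar2} on each collar $T_i$ (together with the companion shell $S_i\sbs\sA$) to manufacture too much energy, contradicting $\lambda_1(X_g)\le\frac{1}{1000\Vol(X_g)}$. First I would quantify the mass on the thick part: since $\int_{\sA}\vph^2 \le (\sup_{\sA}|\vph|)^2\cdot\Vol(\sA) < \frac{1}{1024\Vol(X_g)}\cdot\Vol(X_g) = \frac{1}{1024}$, the normalization $\int_{X_g}\vph^2=1$ forces $\int_{\sB}\vph^2 > \frac{1023}{1024}$. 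Since $\sB$ has at most $3g-3$ components, there is at least one collar, say $T_{i_0}$, with $\int_{T_{i_0}}\vph^2 \ge \frac{1023}{1024(3g-3)}=:c$; and by construction $S_{i_0}\sbs\sA$, so $\int_{S_{i_0}}\vph^2 \le \frac{1}{1024\Vol(X_g)}$ and $\int_{S_{i_0}}|\nabla\vph|^2 \le \int_{X_g}|\nabla\vph|^2 = \lambda_1(X_g) \le \frac{1}{1000\Vol(X_g)}$.

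Next I would check the hypotheses of Lemma~\ref{eigen collar2} with this $c$ and a suitable $\delta<\frac{1}{16}$. We need $\int_{S_{i_0}}\vph^2\le\delta c$ and $\int_{S_{i_0}}|\nabla\vph|^2\le\delta c$; both left sides are $O(1/\Vol(X_g))$ while $c$ is of order $1/g$, which up to constants is of order $1/\Vol(X_g)$ since $\Vol(X_g)=4\pi(g-1)$. So the ratio $\frac{\max\{\int_{S_{i_0}}\vph^2,\ \int_{S_{i_0}}|\nabla\vph|^2\}}{c}$ is bounded by an absolute constant, and I would choose the numerical thresholds (the $\frac{1}{1000}$, the $32$, and a $\delta$ such as $\frac{1}{32}$) so that this constant is genuinely below $\frac{1}{16}$; this is where the specific constants in the statement get used. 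Lemma~\ref{eigen collar2} then yields $\int_{T_{i_0}}|\nabla\vph|^2 \ge \frac{1-16\delta}{4}c$, hence $\lambda_1(X_g) = \int_{X_g}|\nabla\vph|^2 \ge \frac{1-16\delta}{4}\cdot\frac{1023}{1024(3g-3)}$, which contradicts the hypothesis $\lambda_1(X_g)\le\frac{1}{1000\Vol(X_g)}=\frac{1}{4000\pi(g-1)}$ once the constants are lined up.

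The main obstacle I anticipate is purely bookkeeping of constants: making sure that the chosen $\delta$ satisfies $\delta<\frac{1}{16}$ simultaneously with $\int_{S_{i_0}}\vph^2\le\delta c$ and $\int_{S_{i_0}}|\nabla\vph|^2\le\delta c$, given that $c$ carries the factor $\frac{1}{3g-3}$ while the pigeonhole only guarantees one good collar. One subtlety to handle with care: the bound on $\int_{S_{i_0}}|\nabla\vph|^2$ uses $\int_{X_g}|\nabla\vph|^2=\lambda_1(X_g)\int_{X_g}\vph^2=\lambda_1(X_g)$, which is exactly where the hypothesis $\lambda_1(X_g)\le\frac{1}{1000\Vol(X_g)}$ enters (it is needed both to bound the shell energy and to derive the final contradiction), so one must be sure the same smallness assumption does double duty without circularity — it does, since the contradiction is with a strictly larger lower bound on $\lambda_1(X_g)$ than the assumed upper bound. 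A final minor point: one should note $\Vol(\sA)\le\Vol(X_g)$ trivially, and that $\sA\neq\emptyset$ and $\sB\neq\emptyset$ under the running assumption $\mathcal{L}_1(X_g)\le\eps$, so the pigeonhole step is legitimate.
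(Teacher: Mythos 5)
Your proposal is correct and follows essentially the same route as the paper: a contradiction argument that concentrates $L^2$-mass in $\sB$, extracts one collar $T_{i_0}$ by pigeonhole, and feeds $T_{i_0}$ and its shell into Lemma~\ref{eigen collar2}; your uniform pigeonhole over at most $3g-3$ collars replaces the paper's weighted pigeonhole against $\Vol(T_i)/\Vol(X_g)$ combined with $\Vol(T_i)\geq\tfrac12$ from Lemma~\ref{thick-thin}, but both give $c$ of the same order $1/g$. One small bookkeeping slip: $\int_{S_{i_0}}\vph^2 \leq (\sup_{\sA}|\vph|)^2\,\Vol(S_{i_0}) < \frac{4}{1024\,\Vol(X_g)}$, not $\frac{1}{1024\,\Vol(X_g)}$ — the needed ingredient is $\Vol(S_{i_0})\leq 4$ from Lemma~\ref{thick-thin} — though the extra factor of $4$ is harmlessly absorbed by your choice of $\delta$.
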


\begin{proof}
Suppose for contradiction that
$$\sup_{x\in \sA}|\vph(x)| < s\frac{1}{\sqrt{\Vol(X_g)}}$$
where $$s=\frac{1}{32}.$$ Then we have
$$\int_{\sA} \vph^2 \leq s^2 \frac{\Vol(\sA)}{\Vol(X_g)}.$$

\noindent Since $\int_{X_g} \vph^2=1$, $X_g$ has non-empty modified thin part $\sB$ defined in \eqref{def-sB}. Thus one may assume that $\sB$ consists of disjoint collars $T_1,...,T_k$ where $k>0 \in \Z$. Then we have

\beqar
\sum_{i=1}^k \int_{T_i} \vph^2
&=&1- \int_{\sA} \vph^2 \nonumber \\
&\geq& 1-s^2 \nonumber \\
&\geq& (1-s^2)\sum_{i=1}^k \frac{\Vol(T_i)}{\Vol(X_g)}.\nonumber
\eeqar

\noindent Thus for certain component $T$ of $\sB$, we have
$$\int_T \vph^2 \geq (1-s^2)\Vol(T)\frac{1}{\Vol(X_g)}.$$

\noindent Recall that the shell $S$ of collar $T$ is defined as
$$S=\{p\in X_g; \ 0<\dist(p,T)\leq 1\} \subset \sA$$
and Lemma \ref{thick-thin} tells that
$$\frac{1}{2}\leq\Vol(T)\leq 4 \quad \emph{and} \quad \frac{1}{2}\leq\Vol(S)\leq 4.$$
So we have
\beqar
\int_T \vph^2 &\geq& \frac{1-s^2}{2} \frac{1}{\Vol(X_g)},\\
\int_S \vph^2 &\leq&  s^2\frac{\Vol(S)}{\Vol(X_g)} \leq 4s^2 \frac{1}{\Vol(X_g)},\\
\int_S |\nabla \vph|^2 &\leq& \int_{X_g} |\nabla \vph|^2 = \lambda_1(X_g) \leq \frac{1}{1000}\frac{1}{\Vol(X_g)}.
\eeqar
Note that $s^2 = \frac{1}{1024}$. We apply Lemma \ref{eigen collar2} for the case that $c=\frac{1-s^2}{2} \frac{1}{\Vol(X_g)}$ and $\delta=\frac{1}{64}$ to get
\beqar
\int_{T} |\nabla\vph|^2
&\geq& \frac{3}{16} \frac{1-\frac{1}{1024}}{2} \frac{1}{\Vol(X_g)} \nonumber \\
&\geq& \frac{1}{16} \frac{1}{\Vol(X_g)}. \nonumber
\eeqar
Then we have
$$\lambda_1(X_g) =\int_{X_g} |\nabla\vph|^2 \geq \frac{1}{16} \frac{1}{\Vol(X_g)}$$
which contradicts our assumption
\[\lambda_1(X_g) \leq \frac{1}{1000}\frac{1}{\Vol(X_g)}.\]
The proof is complete.
\end{proof}

Recall \eqref{l_1^2/g^2} says that
\begin{equation}
\lambda_1 (X_g) \geq \min\{\frac{1}{4},\frac{\mathcal L_1(X_g)^2}{4\Vol(X_g)^2}\} \nonumber
\end{equation}

In order to prove Theorem \ref{mt-1}, i.e., $$\lambda_1(X_g) \geq K \frac{\mathcal L_1(X_g)}{\Vol(X_g)^2}$$ where $K>0$ is a uniform constant independent of $g$. In light of \eqref{l_1^2/g^2} it suffices to consider the case that $\mathcal L_1(X_g)$ is small. Now we assume that \bear \mathcal L_1(X_g) \leq \eps \eear and
\begin{equation}\label{lambda assumption}
\lambda_1(X_g) \leq \frac{1}{1000\eps} \frac{\mathcal L_1(X_g)}{\Vol(X_g)^2}.
\end{equation}
Since $\Vol(X_g)=4\pi(g-1)$, by \eqref{lambda assumption} we know that $$\lambda_1(X_g) \leq \frac{1}{4}.$$ So first it follows by Proposition \ref{oscillation} that
\begin{eqnarray}\label{osc sum}
\sum_{i=1}^m \Osc(i) &\leq& c(\eps) \sqrt{\Vol(X_g)\lambda_1(X_g)}\\
&\leq & c(\eps)\sqrt{\frac{\mathcal L_1(X_g)}{1000\eps}} \frac{1}{\sqrt{\Vol(X_g)}}.\nonumber
\end{eqnarray}

\noindent Since $\mathcal{L}_1(X_g)\leq \eps$ and $\Vol(X_g)=4\pi(g-1)>1$, by \eqref{lambda assumption} we know that $$\lambda_1(X_g) \leq \min\{\frac{1}{4},\frac{1}{1000}\frac{1}{\Vol(X_g)}\}.$$ Then it follows by the remark following Lemma \ref{sign change} and Proposition \ref{max thick part} that
\begin{equation}\label{max-min-1}
\max_{x\in \sA}\vph(x) - \min_{x\in \sA}\vph(x) \geq \frac{1}{32\sqrt{\Vol(X_g)}}.
\end{equation}

Now we have the following result.
\begin{proposition}\label{eige-gap}
Let $\eps>0$ in Lemma \ref{thick-thin} and $X_g$ be a hyperbolic surface of genus $g$ with 
\begin{equation}\label{how small l_1-1}
c(\eps)\sqrt{\frac{\mathcal L_1(X_g)}{1000\eps}} < \frac{1}{64} \quad \emph{and} \quad \lambda_1(X_g) \leq \frac{1}{1000\eps} \frac{\mathcal L_1(X_g)}{\Vol(X_g)^2} \quad \emph{and} \quad \mathcal L_1(X_g) \leq \eps.\nonumber
\end{equation}
Then we have
\begin{equation}
|\max_{x\in \sA}\vph(x) - \min_{x\in \sA}\vph(x)| - \sum_{i=1}^m \Osc(i) \geq \frac{1}{64\sqrt{\Vol(X_g)}}. \nonumber
\end{equation}
\end{proposition}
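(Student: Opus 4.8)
The plan is to obtain the proposition as a direct bookkeeping consequence of the two estimates \eqref{osc sum} and \eqref{max-min-1} that have already been assembled under precisely the present hypotheses. First I would record why the chain of smallness assumptions puts us in position to apply both Proposition \ref{oscillation} and Proposition \ref{max thick part}: the assumption $\mathcal{L}_1(X_g)\le \eps$ together with $\Vol(X_g)=4\pi(g-1)>1$ and \eqref{lambda assumption} forces $\lambda_1(X_g)\le \frac14$ and also $\lambda_1(X_g)\le \frac{1}{1000}\frac{1}{\Vol(X_g)}$, so the hypotheses of Proposition \ref{max thick part} hold, and the remark following Lemma \ref{sign change} applies. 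Hence both \eqref{osc sum} and \eqref{max-min-1} are at our disposal.

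Next I would use \eqref{max-min-1}, namely
\[
\max_{x\in\sA}\vph(x)-\min_{x\in\sA}\vph(x)\ \geq\ \frac{1}{32\sqrt{\Vol(X_g)}},
\]
and combine it with \eqref{osc sum}, which gives $\sum_{i=1}^m\Osc(i)\le c(\eps)\sqrt{\tfrac{\mathcal{L}_1(X_g)}{1000\eps}}\cdot\tfrac{1}{\sqrt{\Vol(X_g)}}$. Feeding in the first hypothesis $c(\eps)\sqrt{\tfrac{\mathcal{L}_1(X_g)}{1000\eps}}<\tfrac{1}{64}$, I get
\[
\sum_{i=1}^m\Osc(i)\ <\ \frac{1}{64\sqrt{\Vol(X_g)}}.
\]
Subtracting this from the previous display, and noting that $\max_{x\in\sA}\vph(x)\ge\min_{x\in\sA}\vph(x)$ so the absolute value is superfluous, yields
\[
\Bigl|\max_{x\in\sA}\vph(x)-\min_{x\in\sA}\vph(x)\Bigr|-\sum_{i=1}^m\Osc(i)\ \geq\ \frac{1}{32\sqrt{\Vol(X_g)}}-\frac{1}{64\sqrt{\Vol(X_g)}}\ =\ \frac{1}{64\sqrt{\Vol(X_g)}},
\]
which is the claimed inequality.

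I do not expect a genuine obstacle here: all the analytic content lives in the earlier results — the Sobolev-embedding and ball-covering argument behind Proposition \ref{oscillation}, and the cut-off/Dirichlet-energy argument (via Lemma \ref{eigen collar2}) behind Proposition \ref{max thick part} — and Proposition \ref{eige-gap} is only the arithmetic conclusion that "oscillation on the thick part beats total oscillation on its components, with a uniform gap." The single point requiring care is the verification, carried out in the paragraphs preceding the statement, that the three smallness conditions in the hypothesis really do imply the hypotheses of both Proposition \ref{oscillation} and Proposition \ref{max thick part}; once that is in place the rest is immediate.
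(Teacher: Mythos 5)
Your proof is correct and matches the paper's, which simply observes that the proposition ``clearly follows by \eqref{osc sum} and \eqref{max-min-1}.'' You spell out exactly the preceding paragraph of the paper (verifying that the three smallness hypotheses let one invoke Proposition \ref{oscillation}, the remark after Lemma \ref{sign change}, and Proposition \ref{max thick part}) and then do the same subtraction $\tfrac{1}{32} - \tfrac{1}{64} = \tfrac{1}{64}$.
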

\bp
It clearly follows by \eqref{osc sum} and \eqref{max-min-1}.
\ep

\noindent Proposition \ref{eige-gap} in particular implies that the complement $X_g-\sA$ of $\sA$ is non-empty if the assumptions in the proposition hold.

%%%%%%%%%%%%%%%%%%%%%%%%%%%%%%%%%%%%
\section{Proof of Theorem \ref{mt-1}} \label{sec-mt-1}
In this section we prove Theorem \ref{mt-1}. We first make some necessary preparations.

Recall that $\{M_i\}_{1\leq i \leq m}$ are the components of $\sA$. For each $i \in [1,m]$ we let $a_i, b_i \in \R$ such that 
$$a_i=\min_{x\in M_i} \vph(x) \quad \emph{and} \quad b_i=\max_{x\in M_i} \vph(x).$$
That is, $Im (\vph|_{M_i})=[a_i,b_i]$. So for each $i \in [1,m]$,
$$\Osc(i)=b_i-a_i.$$

For each component of $X_g\setminus \sA$, it is a collar whose two boundary curves are contained in two components denoted by $M_i$ and $M_j$ of $\sA$ ($M_i$ may be the same as $M_j$). There may exist multiple collars bounded by $M_i$ and $M_j$. We denote these collars by $T_{ij}^1,...,T_{ij}^{\theta_{ij}}$. Set
\begin{equation}
\delta_{ij}^\theta = \min_{(x,x^*)\in \Gamma_1\times \Gamma_2} |\vph(x)-\vph(x^*)| \nonumber
\end{equation}
where $\Gamma_1,\Gamma_2$ are the two boundaries of $T_{ij}^\theta$ and $x^*\in \Gamma_2$ is the reflection of $x\in\Gamma_1$ through the center closed geodesic $\gamma_{ij}^\theta$. By Lemma \ref{eigen collar3} we have
\begin{equation}
\int_{T_{ij}^\theta} |\nabla \vph|^2 \geq \frac{r_{ij}^\theta}{4}(\delta_{ij}^\theta)^2 \nonumber
\end{equation}
where $r_{ij}^\theta$ is the length of center closed geodesic of collar $T_{ij}^\theta$. Set
\begin{equation} \label{delta-ij}
\delta_{ij} =
\begin{cases}
0 & \text{if}\ [a_i,b_i]\cap[a_j,b_j]\neq \emptyset, \\
\dist([a_i,b_i],[a_j,b_j]) & \text{if}\ [a_i,b_i]\cap[a_j,b_j]= \emptyset.
\end{cases}
\end{equation}
Then
\bear
\delta_{ij}^\theta \geq \delta_{ij}.
\eear
Let $\{T_{ij}^\theta\}$ be all the components of $X_g\setminus \sA$. Putting the inequalities above together we get
\begin{align}\label{before the last}
\lambda_1(X_g)
&= \int_{X_g} |\nabla \vph|^2  \\
&\geq \sum_{T_{ij}^\theta} \int_{T_{ij}^\theta} |\nabla \vph|^2 \nonumber \\
&\geq \sum_{T_{ij}^\theta} \frac{r_{ij}^\theta}{4}(\delta_{ij})^2 \nonumber \\
&\geq \frac{1}{4} \frac{1}{\sharp(\emph{components of} \ X_g\setminus \sA)} \left(\sum_{T_{ij}^\theta} \sqrt{r_{ij}^\theta}\delta_{ij}\right)^2 \nonumber \\
&\geq \frac{1}{12(g-1)} \left(\sum_{T_{ij}^\theta} \sqrt{r_{ij}^\theta}\delta_{ij}\right)^2. \nonumber
\end{align}

\noindent Next we will bound the quantity $\sum_{T_{ij}^\theta} \sqrt{r_{ij}^\theta}\delta_{ij}$ from below in terms of $\mathcal{L}_1(X_g)$ and $\Vol(X_g)$. As a function, the summation $\sum_{T_{ij}^\theta} \sqrt{r_{ij}^\theta}\delta_{ij}$ is linear with respect to the intervals $[a_i,b_i]$. So it achieves its minimum at some extremal case, which may be related to $\mathcal{L}_1(X_g)$. We first prove the following elementary property.

\begin{lemma}\label{linear lemma}
Let $n>0\in \Z$ and assume $\{I_i=[a_i,b_i]\}_{1\leq i \leq n}$ are $n$ closed intervals with increasing order
\begin{equation}\label{relation of intervals}
a_1 \leq b_1 \leq a_2 \leq b_2 \leq a_3...< a_n \leq b_n.
\end{equation}
Let $\delta=\sum_{i=1}^n |b_i-a_i|$ be the total lengths of all the intervals. Then for any collection of nonnegative numbers $\{\alpha_{ij}\}_{1\leq i<j \leq n}$, there exists an integer $K_0$ with $1\leq K_0 < n$ such that
\begin{equation}\label{elemq-1}
\sum_{i<j}\alpha_{ij}\dist(I_i,I_j) \geq (b_n-a_1-\sum_{i=1}^n |b_i-a_i|)\sum_{1\leq i\leq K_0 < j\leq n} \alpha_{ij}. 
\end{equation}
\end{lemma}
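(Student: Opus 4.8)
The plan is to reduce the inequality to a one-dimensional combinatorial statement about how a sum of ``gap lengths'' $\dist(I_i, I_j)$ compares with a fixed partition cut. First I would observe that the quantity $b_n - a_1 - \sum_{i=1}^n |b_i - a_i|$ is exactly the total length of the ``gaps'' between consecutive intervals: writing $g_i := a_{i+1} - b_i \geq 0$ for $1 \leq i \leq n-1$, the telescoping identity $b_n - a_1 = \sum_{i=1}^n (b_i - a_i) + \sum_{i=1}^{n-1} g_i$ shows the right-hand factor equals $G := \sum_{i=1}^{n-1} g_i$. (If $G = 0$ the inequality is trivial since the left side is nonnegative, so assume $G > 0$.)

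Next I would express $\dist(I_i, I_j)$ for $i < j$ in terms of the gaps: by \eqref{relation of intervals} we have $\dist(I_i, I_j) = a_j - b_i = \sum_{k=i}^{j-1} g_k$, since all intermediate intervals and gaps lie between $b_i$ and $a_j$. Substituting into the left-hand side of \eqref{elemq-1} and interchanging the order of summation,
\[
\sum_{i<j} \alpha_{ij} \dist(I_i, I_j) = \sum_{i<j} \alpha_{ij} \sum_{k=i}^{j-1} g_k = \sum_{k=1}^{n-1} g_k \Big( \sum_{1 \leq i \leq k < j \leq n} \alpha_{ij} \Big).
\]
Denote by $A_k := \sum_{1 \leq i \leq k < j \leq n} \alpha_{ij} \geq 0$ the ``cut weight'' across position $k$. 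Then the left side is $\sum_{k=1}^{n-1} g_k A_k$.

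Now choose $K_0 \in \{1, \dots, n-1\}$ to be an index where $A_k$ is \emph{smallest}, i.e. $A_{K_0} = \min_{1 \leq k \leq n-1} A_k$. Wait --- that would give a lower bound in the wrong direction, so instead I should be more careful: I want $\sum_k g_k A_k \geq G \cdot A_{K_0}$, i.e. I want a weighted average of the $A_k$ (with weights $g_k$, summing to $G$) to dominate $G \cdot A_{K_0}$; this forces choosing $K_0$ to be an index \emph{minimizing} $A_k$, so that $A_k \geq A_{K_0}$ for all $k$ and hence $\sum_k g_k A_k \geq A_{K_0} \sum_k g_k = A_{K_0} G$. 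This is exactly \eqref{elemq-1} with the stated $K_0$. The constraint $1 \leq K_0 < n$ is automatically satisfied since the $A_k$ are indexed over $1 \leq k \leq n-1$. The main (and only) subtlety is the bookkeeping in the double-sum interchange and the geometric identity $\dist(I_i,I_j) = \sum_{k=i}^{j-1} g_k$; once those are in place the choice of $K_0$ as an argmin of the cut weights $A_k$ finishes the argument immediately, with no further estimates needed.
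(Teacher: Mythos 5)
Your approach is genuinely different from the paper's and, modulo one small slip, is correct and arguably more illuminating. The paper proceeds by induction on $n$: it treats the left side as a function of the interval endpoints, slides the second interval $I_2$ to an extreme position (abutting $I_1$ or $I_3$) using linearity in the shift parameter, merges two intervals, and invokes the inductive hypothesis; this works but obscures the combinatorics. Your argument instead computes both sides directly: rewriting everything in terms of the gap lengths $g_k = a_{k+1} - b_k$ and the cut weights $A_k = \sum_{1\le i\le k<j\le n}\alpha_{ij}$, you reduce the inequality to the observation that a weighted average $\sum_k g_k A_k$ with total weight $G=\sum_k g_k$ dominates $G$ times the minimum of the $A_k$. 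Choosing $K_0 = \arg\min_k A_k$ finishes it. This makes the role of the ``minimizing cut'' transparent and avoids induction entirely.

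The one correction needed: you assert the identity $\dist(I_i,I_j) = a_j - b_i = \sum_{k=i}^{j-1} g_k$, but for $j > i+1$ this is not an equality. In fact
\[
a_j - b_i \;=\; \sum_{k=i}^{j-1} g_k \;+\; \sum_{k=i+1}^{j-1}(b_k - a_k),
\]
since the intermediate intervals $I_{i+1},\dots,I_{j-1}$ also lie between $b_i$ and $a_j$ and contribute their (nonnegative) lengths. Fortunately the discrepancy is nonnegative, so the correct statement $\dist(I_i,I_j) \ge \sum_{k=i}^{j-1} g_k$ is an inequality in exactly the direction you need, and the rest of your argument
\[
\sum_{i<j}\alpha_{ij}\dist(I_i,I_j) \;\ge\; \sum_{i<j}\alpha_{ij}\sum_{k=i}^{j-1}g_k \;=\; \sum_{k=1}^{n-1}g_k A_k \;\ge\; A_{K_0}\,G
\]
goes through unchanged. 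With that equality replaced by an inequality, the proof is complete.
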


\begin{proof}
We prove it by induction on $n$.

If $n=1$, both sides of \eqref{elemq-1} are equal to $0$.

If $n=2$, \eqref{elemq-1} holds by letting $K_0=1$.

Assume that \eqref{elemq-1} holds for $k\leq (n-1)$.

Now for $k=n$, we first regard
\begin{equation}
L_n(a_1,b_1,...,a_n,b_n)=\sum_{i<j}\alpha_{ij}\dist(I_i,I_j) =\sum_{i<j}\alpha_{ij}(a_j-b_i) \nonumber
\end{equation}
as a function $(a_1,b_1,...,a_n,b_n)$. Set
\begin{equation}
L(t)=L_n(a_1,b_1,a_2+t,b_2+t,a_3,b_3,...,a_n,b_n). \nonumber
\end{equation}
Clearly $L(t)$ is linear with respect to $t$ on its domain which is the one preserving the relation (\ref{relation of intervals}). So $L(t)$ takes its minimum on the boundaries, i.e., when
$$a_2+t=b_1\ \ \text{or}\ \ b_2+t=a_3.$$

\noindent So we have
\begin{equation}\label{elem-2}
L_n(a_1,b_1;...) \geq \min
\left\{
  \begin{array}{c}
    L_n(a_1,b_1,b_1,(b_1+b_2-a_2),a_3,b_3,...,a_n,b_n), \\
    L_n(a_1,b_1,(a_3+a_2-b_2),a_3,a_3,b_3,...,a_n,b_n) \\
  \end{array}
\right\}.
\end{equation}
Without loss of generality one may assume that $L_n(a_1,b_1,b_1,(b_1+b_2-a_2),...,a_n,b_n)$ is the smaller one. Then we have
\bear\label{elem-3}
&& L_n(a_1,b_1,b_1,(b_1+b_2-a_2),...,a_n,b_n)= \sum_{i\geq 3}\alpha_{1i}(a_i-b_1)\\
&&+\sum_{i\geq 3}\alpha_{2i}(a_i-(b_1+b_2-a_2))+\sum_{3\leq i <j \leq n}\alpha_{ij}(a_j-b_i) \nonumber \\
&\geq&\sum_{i\geq 3}(\alpha_{1i}+\alpha_{2i})(a_i-(b_1+b_2-a_2))+  \sum_{3\leq i <j \leq n}\alpha_{ij}(a_j-b_i)  \nonumber
\eear
Set \[I_1'=[a_1,b_1+b_2-a_2] \quad \emph{and} \quad \ I_i'=[a_{i+1},b_{i+1}]  \ (2\leq i \leq (n-1))\]
and
\[\alpha_{1i}'=\alpha_{1(i+1)}+\alpha_{2(i+1)}\quad \emph{and} \quad \alpha_{ij}'=\alpha_{(i+1)(j+1)} \ (2\leq i <j\leq (n-1)).\]
Then we have
\beqar
\sum_{i\geq 3}(\alpha_{1i}+\alpha_{2i})(a_i-b_1-b_2+a_2)+  \sum_{3\leq i <j \leq n}\alpha_{ij}(a_j-b_i)=\sum_{1\leq i<j\leq n-1}\alpha'_{ij}\dist(I_i',I_j').
\eeqar

\noindent By our induction assumption on $k=(n-1)$, we know that there exists an integer $K_1\in [1,n-1]$ such that
\bear\label{elem-4}
\sum_{1\leq i<j\leq (n-1)}\alpha'_{ij}\dist(I_i',I_j') &\geq& (b_n-a_1-\sum_{i=1}^n |b_i-a_i|)\sum_{1\leq i\leq K_1 < j\leq (n-1)} \alpha'_{ij}\\
&=& (b_n-a_1-\sum_{i=1}^n |b_i-a_i|)\sum_{1\leq i\leq (K_1+1) < j\leq n} \alpha_{ij}. \nonumber
\eear
\noindent Set $K_0=K_1+1$. Then the conclusion follows by \eqref{elem-2}, \eqref{elem-3} and \eqref{elem-4}.
\end{proof}

Now we return to estimate the quantity $\sum_{T_{ij}^\theta} \sqrt{r_{ij}^\theta}\delta_{ij}$ in \eqref{before the last}.
\begin{proposition}\label{linear method}
\begin{align}
\sum_{T_{ij}^\theta} \sqrt{r_{ij}^\theta}\delta_{ij}
&\geq \sqrt{\mathcal L_1(X_g)}\left(\max_{i,j}|b_i-a_j| - \sum_{i=1}^m |b_i-a_i|  \right) \nonumber \\
&= \sqrt{\mathcal L_1(X_g)}\left(|\max_{x\in \sA}\vph(x) - \min_{x\in \sA}\vph(x)| - \sum_{i=1}^m \Osc(i)  \right) \nonumber
\end{align}
where $[a_i,b_i] = Im(\vph|_{M_i})$.
\end{proposition}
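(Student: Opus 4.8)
The plan is to reduce the inequality to a clean combinatorial statement and then apply Lemma \ref{linear lemma}. First I would relabel the intervals: the components $M_1,\dots,M_m$ of $\sA$ give intervals $[a_i,b_i]=\mathrm{Im}(\vph|_{M_i})$, and by relabeling the indices I may arrange them so that the left endpoints are in increasing order; after merging overlapping intervals (which only decreases the right-hand side and does not change $\delta_{ij}$ since $\delta_{ij}=0$ precisely when $[a_i,b_i]\cap[a_j,b_j]\neq\emptyset$) I may assume the intervals satisfy the nested-order hypothesis \eqref{relation of intervals} of Lemma \ref{linear lemma}. Note that $\max_{i,j}|b_i-a_j|$ equals $b_n-a_1$ after this reordering, and $\sum_i|b_i-a_i|=\sum_i\Osc(i)$, and $b_n-a_1=|\max_{x\in\sA}\vph-\min_{x\in\sA}\vph|$, which gives the asserted equality between the two displayed lines once the inequality in the first line is established.

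Next I would set up the weights. For each unordered pair $\{i,j\}$ with $i<j$, collect all collars $T_{ij}^1,\dots,T_{ij}^{\theta_{ij}}$ of $X_g\setminus\sA$ whose two boundary circles lie on $M_i$ and $M_j$ respectively, and define
\[
\alpha_{ij} := \sum_{\theta=1}^{\theta_{ij}} \sqrt{r_{ij}^\theta},
\]
which is a nonnegative real number. Since $\delta_{ij}^\theta\geq\delta_{ij}$ and $\delta_{ij}=\dist([a_i,b_i],[a_j,b_j])$ when the intervals are disjoint (and $\delta_{ij}=0$ otherwise, in which case that term contributes nothing), we have termwise
\[
\sum_{T_{ij}^\theta}\sqrt{r_{ij}^\theta}\,\delta_{ij} = \sum_{i<j}\alpha_{ij}\,\delta_{ij} = \sum_{i<j}\alpha_{ij}\,\dist([a_i,b_i],[a_j,b_j]),
\]
where the last equality uses that terms with overlapping intervals vanish on both sides.

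Now I would invoke Lemma \ref{linear lemma} with $n=m$, the intervals $I_i=[a_i,b_i]$ (in increasing order), and the weights $\alpha_{ij}$: there is an integer $K_0$ with $1\le K_0<m$ such that
\[
\sum_{i<j}\alpha_{ij}\,\dist(I_i,I_j)\ \geq\ \Bigl(b_m-a_1-\sum_{i=1}^m|b_i-a_i|\Bigr)\sum_{1\le i\le K_0<j\le m}\alpha_{ij}.
\]
The key point is then to bound $\sum_{i\le K_0<j}\alpha_{ij}$ from below by $\sqrt{\mathcal L_1(X_g)}$. The collars $T_{ij}^\theta$ with $i\le K_0<j$ are exactly those joining the union $\bigcup_{i\le K_0}M_i$ to the union $\bigcup_{j>K_0}M_j$; their central geodesics $\gamma_{ij}^\theta$ therefore form a collection of disjoint simple closed curves that separate $X_g$ into at least two pieces (everything on the "$\le K_0$" side from everything on the "$>K_0$" side), so by the definition of $\mathcal L_1$ we get $\sum r_{ij}^\theta\geq\mathcal L_1(X_g)$. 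Using that each $r_{ij}^\theta$ is at most $2\eps<1$ (these are collars in the thin part), so $\sqrt{r_{ij}^\theta}\ge r_{ij}^\theta$, we conclude
\[
\sum_{1\le i\le K_0<j\le m}\alpha_{ij} = \sum_{i\le K_0<j}\sum_\theta\sqrt{r_{ij}^\theta}\ \geq\ \sum_{i\le K_0<j}\sum_\theta r_{ij}^\theta\ \geq\ \mathcal L_1(X_g)\ \geq\ \sqrt{\mathcal L_1(X_g)},
\]
the last step because $\mathcal L_1(X_g)\le\eps<1$. Combining the three displays gives the first displayed inequality of the proposition, and the equality follows from the identifications noted above. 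The main obstacle here is making the separation argument airtight — one must check that the cut system really does separate $\bigcup_{i\le K_0}M_i$ from the rest and that it is a genuine multicurve (disjoint, essential) so that $\mathcal L_1$ applies; this is exactly where the structure of the thick-thin decomposition (each component of $X_g\setminus\sA$ being a collar with both boundaries on $\sA$) is used.
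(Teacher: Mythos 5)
Your overall strategy — merge the intervals into a disjoint, ordered family, re-bundle the collar weights, apply Lemma \ref{linear lemma}, and then recognize the surviving sum $\sum_{1\le k_i\le K_0<k_j\le N}\sqrt{r_{ij}^\theta}$ as the length data of a separating multicurve — is exactly the paper's. But the final chain of inequalities contains a concrete, fatal error. You write
\[
\sum_{1\le i\le K_0<j}\alpha_{ij}\ \geq\ \sum_{i\le K_0<j}\sum_\theta r_{ij}^\theta\ \geq\ \mathcal L_1(X_g)\ \geq\ \sqrt{\mathcal L_1(X_g)},
\]
justifying the last step by $\mathcal L_1(X_g)\le\eps<1$. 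That last step goes the wrong way: if $0<x<1$ then $x<\sqrt{x}$, not $x\geq\sqrt{x}$. So your chain only gives $\sum_{i\le K_0<j}\alpha_{ij}\geq\mathcal L_1(X_g)$, which is \emph{weaker} than the needed $\sqrt{\mathcal L_1(X_g)}$, and when fed back into \eqref{before the last} it only yields $\lambda_1\gtrsim\mathcal L_1(X_g)^2/g^2$ — the Schoen--Wolpert--Yau rate, not the improved linear-in-$\mathcal L_1$ rate claimed in Theorem \ref{mt-1}. The correct step (used in the paper) is the elementary superadditivity $\sum\sqrt{r_{ij}^\theta}\geq\sqrt{\sum r_{ij}^\theta}$ for nonnegative summands, which gives $\sum_{i\le K_0<j}\alpha_{ij}\geq\sqrt{\mathcal L_1(X_g)}$ directly and needs no assumption on the size of $r_{ij}^\theta$; the detour through $\sqrt{r}\geq r$ loses exactly the factor you cannot afford to lose.

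A smaller point: your parenthetical that merging overlapping intervals "does not change $\delta_{ij}$" is false. Merging can strictly decrease the distance between a merged block and a disjoint third interval (e.g., $[0,2],[1.5,3],[5,6]$: $\dist([0,2],[5,6])=3$ but $\dist([0,3],[5,6])=2$). This is not fatal because the change goes in the harmless direction — the merged distance $\delta'_{ij}$ satisfies $\delta_{ij}\ge\delta'_{ij}$, so replacing $\delta_{ij}$ by $\delta'_{ij}$ preserves the lower bound — but your stated justification is wrong, and you should record the inequality $\delta_{ij}\ge\delta'_{ij}$ explicitly rather than assert equality. You also need to be careful that after merging the weights must be re-indexed by the merged blocks (the paper's $\alpha_{pq}=\sum_{k_i=p,\,k_j=q}\sqrt{r_{ij}^\theta}$), since several distinct components $M_i$ can land in the same block; your notation keeps using the original indices and blurs this.
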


\begin{proof}
Assume
$$\bigcup_{i=1}^m [a_i,b_i] = \bigcup_{k=1}^N [e_k,f_k]$$
where the $[e_k,f_k]$ are disjoint and
\begin{equation}
\min_{x\in \sA}\vph(x)= e_1 \leq f_1 < e_2 \leq f_2 <...< e_k \leq f_k = \max_{x\in \sA}\vph(x). \nonumber
\end{equation}
Since the $[e_k,f_k]$ are disjoint, for each $i \in [1,m]$ there exists a unique $k_i \in [1,N]$ such that
$$Im(\vph|_{M_i}) = [a_i,b_i] \sbs [e_{k_i},f_{k_i}].$$
For $i\neq j$, we set
\[ \delta'_{ij}:=\dist([e_{k_i},f_{k_i}],[e_{k_j},f_{k_j}]).\]
Let $\delta_{ij}$ be the constants given in \eqref{delta-ij}. Clearly we have
\begin{equation}
\delta_{ij} \geq \delta'_{ij} \quad \emph{for all $i\neq j$}.
\end{equation}
Which implies that
\bear \label{sum-l-1}
\sum_{T_{ij}^\theta} \sqrt{r_{ij}^\theta}\delta_{ij}\geq \sum_{T_{ij}^\theta} \sqrt{r_{ij}^\theta}\delta'_{ij}.
\eear
We rewrite the quantity $\sum_{T_{ij}^\theta} \sqrt{r_{ij}^\theta}\delta'_{ij}$ as
\bear \label{sum-l-1.5}
\sum_{T_{ij}^\theta} \sqrt{r_{ij}^\theta}\delta'_{ij}=\sum_{p<q} \left(\sum_{\ k_i=p,\ k_j=q}\sqrt{r_{ij}^\theta}\right) \dist([e_p,f_p],[e_q,f_q]).
\eear

\noindent Then it follows by Lemma \ref{linear lemma} that there exists an integer $K_0 \in [1, N-1]$ such that
\begin{align} \label{sum-l-2}
&\sum_{p<q} \left(\sum_{\ k_i=p,\ k_j=q}\sqrt{r_{ij}^\theta}\right) \dist([e_p,f_p],[e_q,f_q]) \\
&\geq (f_N-e_1-\sum_{k=1}^N |f_k-e_k|) \sum_{1\leq k_i \leq K_0 < k_j\leq N} \sqrt{r_{ij}^\theta}  \nonumber \\
&\geq \left(\sum_{1\leq k_i \leq K_0 < k_j\leq N} \sqrt{r_{ij}^\theta}\right) \left(\max_{i,j}|b_i-a_j| - \sum_{i=1}^m |b_i-a_i|\right). \nonumber
\end{align}

\noindent Note that the unions of all the center closed geodesics of collars $T_{ij}^\theta$, which satisfy $1\leq k_i \leq K_0 < k_j\leq N$,  divide $X_g$ into at least two components: one contains a $M_i$ with $k_i\leq K_0$; another one contains a $M_j$ with $k_j > K_0$. So by the definition of $\mathcal L_1(X_g)$ we have
\begin{equation} 
\sum_{1\leq k_i \leq K_0 < k_j\leq N} r_{ij}^\theta \geq \mathcal L_1(X_g)
\end{equation}
implying that
\bear \label{sum-l-3}
\sum_{1\leq k_i \leq K_0 < k_j\leq N} \sqrt{r_{ij}^\theta}\geq \sqrt{\sum_{1\leq k_i \leq K_0 < k_j\leq N} r_{ij}^\theta}\geq \sqrt{\mathcal L_1(X_g)}.
\eear

Then it follows by \eqref{sum-l-1}, \eqref{sum-l-1.5}, \eqref{sum-l-2} and \eqref{sum-l-3} that
\begin{align}
\sum_{T_{ij}^\theta} \sqrt{r_{ij}^\theta}\delta_{ij}\geq \sqrt{\mathcal L_1(X_g)} \left(\max_{i,j}|b_i-a_j|-\sum_{i=1}^m |b_i-a_i|\right)
\end{align}
which completes the proof.
\end{proof}

Now we are ready to prove Theorem \ref{mt-1}.
\bp [Proof of Theorem \ref{mt-1}]
Recall that \eqref{l_1^2/g^2} says that
\[\lambda_1 (X_g) \geq \min\{\frac{1}{4},\frac{\mathcal L_1(X_g)^2}{4\Vol(X_g)^2}\}.\]
Let $\eps>0$ be the uniform constant in Lemma \ref{thick-thin}. First we assume that  
\begin{equation}\label{how small l_1-1-1}
c(\eps)\sqrt{\frac{\mathcal L_1(X_g)}{1000\eps}} < \frac{1}{64} \quad \text{and} \quad \mathcal{L}_1 (X_g) \leq \eps,
\end{equation}
otherwise we have
\bear
\frac{\mathcal L_1(X_g)^2}{\Vol(X_g)^2} \geq\frac{1000\eps}{64^2 c(\eps)^2} \frac{\mathcal L_1(X_g)}{\Vol(X_g)^2} 
\ \text{or} \
\frac{\mathcal L_1(X_g)^2}{\Vol(X_g)^2} \geq \eps  \frac{\mathcal L_1(X_g)}{\Vol(X_g)^2}
\eear
which together with \eqref{l_1^2/g^2} complete the proof. Now we also assume that
\bear \label{eige-v}
\lambda_1(X_g) \leq \frac{1}{1000\eps} \frac{\mathcal L_1(X_g)}{\Vol(X_g)^2},
\eear
otherwise we are done. By our assumption \eqref{how small l_1-1-1} and \eqref{eige-v}, we apply Proposition \ref{eige-gap} to get
\begin{equation}
|\max_{x\in \sA}\vph(x) - \min_{x\in \sA}\vph(x)| - \sum_{i=1}^m \Osc(i) \geq \frac{1}{64\sqrt{\Vol(X_g)}}. 
\end{equation}

\noindent Thus, by combining (\ref{before the last}) and Proposition \ref{linear method} we have
\begin{align}
\lambda_1(X_g)
&\geq \frac{1}{12(g-1)} \left(\sum_{T_{ij}^\theta} \sqrt{r_{ij}^\theta}\delta_{ij}\right)^2 \ \\
&\geq \frac{\mathcal L_1(X_g)}{12(g-1)}\left(|\max_{x\in A}\vph(x) - \min_{x\in A}\vph(x)| - \sum_{i=1}^m \Osc(i) \right)^2. \nonumber \\
&\geq \frac{\mathcal L_1(X_g)}{49152} \frac{1}{(g-1)\cdot \Vol(X_g)}. \nonumber
\end{align}

\noindent Recall that $\Vol(X_g)=4\pi(g-1)$. Therefore, the discussions above imply that
\begin{align}
\lambda_1(X_g)
&\geq \min
\left\{
\begin{array}{c}
	\frac{1}{4}, \eps \cdot \frac{\mathcal L_1(X_g)}{4\Vol(X_g)^2}, \frac{1000\eps}{64^2 c(\eps)^2}\cdot \frac{\mathcal L_1(X_g)}{4\Vol(X_g)^2},  \\
	\frac{1}{1000\eps} \frac{\mathcal L_1(X_g)}{\Vol(X_g)^2}, \frac{\mathcal L_1(X_g)}{49152} \frac{1}{(g-1)\cdot \Vol(X_g)} \\
\end{array}
\right\} \\
&\geq \min\{\frac{1}{4}, c(\eps)\cdot \frac{\mathcal L_1(X_g)}{g^2} \}  \nonumber
\end{align}
where $c(\eps)>0$ is a uniform constant only depending on $\eps$. By Lemma \ref{ub-L1} we know that \[ \mathcal{L}_1(X_g) \leq c'\cdot g\] for some uniform constant $c'>0$. So for large enough $g$, $ c(\eps)\cdot \frac{\mathcal L_1(X_g)}{g^2} <\frac{1}{4}$. Thus, we get
\begin{equation}
\lambda_1(X_g) \geq K_1 \cdot \frac{\mathcal L_1(X_g)}{g^2}
\end{equation}
for some uniform constant $K_1>0$ independent of $g$. The proof is complete.
\ep

%%%%%%%%%%%%%%%%%%%%%%%%%%%%%%%%%%%%%%%%%%%%%%%%%%%%%%%%%%%%%%%%%%%%%%%%%%%%%%%%%%%%

\section{An optimal example}\label{sec-opti}

In this section, we use the results in \cite{WX18} to explain that the lower bound of $\lambda_1(X_g)$ in Theorem \ref{mt-1} is optimal as $g\to \infty$. That is, for all genus $g\geq 2$, there exists some Riemann surfaces $\mathcal X_g$ of genus $g$ such that
\begin{equation}
\lambda_1(\mathcal X_g) \leq K_2 \frac{\mathcal L_1(\mathcal X_g)}{g^2} \nonumber
\end{equation}
for some uniform constant $K_2>0$, especially when $\mathcal L_1(\mathcal X_g)$ is arbitrarily small.

The following construction of $\mathcal X_g$ is given in \cite{WX18}.

Recall that a pair of pants is a compact Riemann surface of $0$ genus with $3$ boundary closed geodesics. The complex structure is uniquely determined by the lengths of the three boundary closed geodesics.

For a fixed
\begin{equation}
\ell < 2\arcsinh 1, \nonumber
\end{equation}
consider the pair of pants $\mathcal{P}_\ell$ whose boundary curves all have length equal to $\ell$. We construct $\mathcal X_g$ by gluing $3g-3$ pants $\mathcal P_\ell$ along boundary loops (with arbitrary twist parameters) as shown in figure \ref{pic surface}.

\begin{figure}[h]
\begin{center}
\begin{tikzpicture}[scale=1]

\draw (0,0.3) ..controls +(-0.5,0)and +(0.5,0).. +(-1,0.5)  (0,-0.3) ..controls +(-0.5,0)and +(0.5,0).. +(-1,-0.5);
\draw (-2,0.3) ..controls +(0.5,0)and +(-0.5,0).. +(1,0.5)  (-2,-0.3) ..controls +(0.5,0)and +(-0.5,0).. +(1,-0.5);

\draw (-2,0.3) ..controls +(-0.5,0)and +(0.5,0).. +(-1,0.5)  (-2,-0.3) ..controls +(-0.5,0)and +(0.5,0).. +(-1,-0.5);
\draw (-4,0.3) ..controls +(0.5,0)and +(-0.5,0).. +(1,0.5)  (-4,-0.3) ..controls +(0.5,0)and +(-0.5,0).. +(1,-0.5);

\draw (-4,0.3) ..controls +(-0.5,0)and +(0.5,0).. +(-1,0.5)  (-4,-0.3) ..controls +(-0.5,0)and +(0.5,0).. +(-1,-0.5);
\draw (-5,0.8) ..controls +(-0.7,0)and +(0,0.3).. +(-1,-0.8) (-5,-0.8) ..controls +(-0.7,0)and +(0,-0.3).. +(-1,0.8);

\draw (2,0.3) ..controls +(0.5,0)and +(-0.5,0).. +(1,0.5)  (2,-0.3) ..controls +(0.5,0)and +(-0.5,0).. +(1,-0.5);
\draw (3,0.8) ..controls +(0.7,0)and +(0,0.3).. +(1,-0.8) (3,-0.8) ..controls +(0.7,0)and +(0,-0.3).. +(1,0.8);

\draw (0,0) +(0,0.3)--+(0.1,0.3) +(0,-0.3)--+(0.1,-0.3);
\draw (2,0) +(0,0.3)--+(-0.1,0.3) +(0,-0.3)--+(-0.1,-0.3);

\draw[dashed] (0.3,0)--(1.7,0);

\draw (-1,-0.2) ..controls +(0.1,0).. +(0.3,0.2)  +(0,0) ..controls +(-0.1,0).. +(-0.3,0.2)
+(0,0.4) ..controls +(0.1,0).. +(0.3,0.2)  +(0,0.4) ..controls +(-0.1,0).. +(-0.3,0.2)
+(0.3,0.2)--+(0.4,0.3)  +(-0.3,0.2)--+(-0.4,0.3);

\draw (-3,-0.2) ..controls +(0.1,0).. +(0.3,0.2)  +(0,0) ..controls +(-0.1,0).. +(-0.3,0.2)
+(0,0.4) ..controls +(0.1,0).. +(0.3,0.2)  +(0,0.4) ..controls +(-0.1,0).. +(-0.3,0.2)
+(0.3,0.2)--+(0.4,0.3)  +(-0.3,0.2)--+(-0.4,0.3);

\draw (-5,-0.2) ..controls +(0.1,0).. +(0.3,0.2)  +(0,0) ..controls +(-0.1,0).. +(-0.3,0.2)
+(0,0.4) ..controls +(0.1,0).. +(0.3,0.2)  +(0,0.4) ..controls +(-0.1,0).. +(-0.3,0.2)
+(0.3,0.2)--+(0.4,0.3)  +(-0.3,0.2)--+(-0.4,0.3);

\draw (3,-0.2) ..controls +(0.1,0).. +(0.3,0.2)  +(0,0) ..controls +(-0.1,0).. +(-0.3,0.2)
+(0,0.4) ..controls +(0.1,0).. +(0.3,0.2)  +(0,0.4) ..controls +(-0.1,0).. +(-0.3,0.2)
+(0.3,0.2)--+(0.4,0.3)  +(-0.3,0.2)--+(-0.4,0.3);

\draw[dashed][thick] (0,0) +(0,-0.3) ..controls +(0.1,0.3).. +(0,0.3);
\draw[thick] (0,0) +(0,-0.3) ..controls +(-0.1,0.3).. +(0,0.3);

\draw[dashed][thick] (2,0) +(0,-0.3) ..controls +(0.1,0.3).. +(0,0.3);
\draw[thick] (2,0) +(0,-0.3) ..controls +(-0.1,0.3).. +(0,0.3);

\draw[dashed][thick] (-2,0) +(0,-0.3) ..controls +(0.1,0.3).. +(0,0.3);
\draw[thick] (-2,0) +(0,-0.3) ..controls +(-0.1,0.3).. +(0,0.3);

\draw[dashed][thick] (-4,0) +(0,-0.3) ..controls +(0.1,0.3).. +(0,0.3);
\draw[thick] (-4,0) +(0,-0.3) ..controls +(-0.1,0.3).. +(0,0.3);

\draw[dashed] (-1,0.5) +(0,-0.3) ..controls +(0.1,0.3).. +(0,0.3);
\draw (-1,0.5) +(0,-0.3) ..controls +(-0.1,0.3).. +(0,0.3);

\draw[dashed] (-1,-0.5) +(0,-0.3) ..controls +(0.1,0.3).. +(0,0.3);
\draw (-1,-0.5) +(0,-0.3) ..controls +(-0.1,0.3).. +(0,0.3);

\draw[dashed] (-3,0.5) +(0,-0.3) ..controls +(0.1,0.3).. +(0,0.3);
\draw (-3,0.5) +(0,-0.3) ..controls +(-0.1,0.3).. +(0,0.3);

\draw[dashed] (-3,-0.5) +(0,-0.3) ..controls +(0.1,0.3).. +(0,0.3);
\draw (-3,-0.5) +(0,-0.3) ..controls +(-0.1,0.3).. +(0,0.3);

\draw (-6,0) ..controls +(0.35,-0.1).. +(0.7,0);
\draw[dashed] (-6,0) ..controls +(0.35,0.1).. +(0.7,0);

\draw (4,0) ..controls +(-0.35,-0.1).. +(-0.7,0);
\draw[dashed] (4,0) ..controls +(-0.35,0.1).. +(-0.7,0);

\end{tikzpicture}
\end{center}
\caption{Surface $\mathcal{X}_g$} \label{pic surface}
\end{figure}
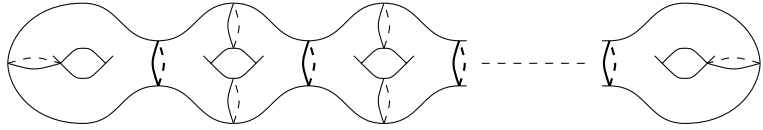

For any closed geodesic $\gamma \subset \mathcal X_g$, the curve $\gamma$ is either one of the boundary closed geodesic of certain $\mathcal{P}_\ell$ or must intersect at least one of the boundary closed geodesic of certain $\mathcal{P}_\ell$. Recall that the Collar Lemma \ref{collar} implies that for any two intersected closed geodesics, then at least one of them have length larger than $2\arcsinh 1$. So if $\gamma$ is a systolic curve of $\mathcal X_g$, then $\gamma$ must be one of the boundary curve of $\mathcal{P}_\ell$. Thus, the systole of $X_g$ and $\mathcal L_1(\mathcal X_g)$ are both equal to $\ell$. That is,
\begin{equation}
\mathcal L_1(\mathcal X_g) = \sys(\mathcal X_g) = \ell. \nonumber
\end{equation}

\noindent It was proved \cite[Proposition 15 or Remark 16]{WX18}  that
\begin{equation}
\lambda_1(\mathcal X_g) \leq \frac{\beta(\ell)}{g^2}. 
\end{equation}
\noindent Where for the case that $\ell< 2\arcsinh 1$ and $\mathcal X_g$ constructed as above, $$\beta(\ell) \leq K_2\cdot \ell$$ for some universal constant $K_2>0$. So we have
\begin{equation}
\lambda_1(\mathcal X_g) \leq K_2\frac{\mathcal L_1(\mathcal X_g)}{g^2} 
\end{equation}
which tells that the lower bound of $\lambda_1(X_g)$ in Theorem \ref{mt-1} is optimal either as 
$g\to \infty$ or as $\mathcal{L}_1(X_g)\to 0$.

\bibliographystyle{plain}
\bibliography{ref}

\begin{thebibliography}{10}

\bibitem{BMM16}
Werner Ballmann, Henrik Matthiesen, and Sugata Mondal.
\newblock Small eigenvalues of closed surfaces.
\newblock {\em J. Differential Geom.}, 103(1):1--13, 2016.

\bibitem{BMM17}
Werner Ballmann, Henrik Matthiesen, and Sugata Mondal.
\newblock On the analytic systole of {R}iemannian surfaces of finite type.
\newblock {\em Geom. Funct. Anal.}, 27(5):1070--1105, 2017.

\bibitem{Bers-c}
Lipman Bers.
\newblock An inequality for {R}iemann surfaces.
\newblock In {\em Differential geometry and complex analysis}, pages 87--93.
  Springer, Berlin, 1985.

\bibitem{Bus77}
Peter Buser.
\newblock Riemannsche {F}l\"achen mit {E}igenwerten in {$(0,$} {$1/4)$}.
\newblock {\em Comment. Math. Helv.}, 52(1):25--34, 1977.

\bibitem{Buser10}
Peter Buser.
\newblock {\em Geometry and spectra of compact {R}iemann surfaces}, volume 106
  of {\em Progress in Mathematics}.
\newblock Birkh\"auser Boston, Inc., Boston, MA, 1992.

\bibitem{Chavel}
Isaac Chavel.
\newblock {\em Eigenvalues in {R}iemannian geometry}, volume 115 of {\em Pure
  and Applied Mathematics}.
\newblock Academic Press, Inc., Orlando, FL, 1984.
\newblock Including a chapter by Burton Randol, With an appendix by Jozef
  Dodziuk.

\bibitem{Che70}
Jeff Cheeger.
\newblock A lower bound for the smallest eigenvalue of the {L}aplacian.
\newblock In {\em Problems in analysis ({P}apers dedicated to {S}alomon
  {B}ochner, 1969)}, pages 195--199. 1970.

\bibitem{Cheng76}
Shiu~Yuen Cheng.
\newblock Eigenfunctions and nodal sets.
\newblock {\em Comment. Math. Helv.}, 51(1):43--55, 1976.

\bibitem{DPRS85}
Jozef Dodziuk, Thea Pignataro, Burton Randol, and Dennis Sullivan.
\newblock Estimating small eigenvalues of {R}iemann surfaces.
\newblock In {\em The legacy of {S}onya {K}ovalevskaya ({C}ambridge, {M}ass.,
  and {A}mherst, {M}ass., 1985)}, volume~64 of {\em Contemp. Math.}, pages
  93--121.

\bibitem{DR86}
Jozef Dodziuk and Burton Randol.
\newblock Lower bounds for {$\lambda_1$} on a finite-volume hyperbolic
  manifold.
\newblock {\em J. Differential Geom.}, 24(1):133--139, 1986.

\bibitem{Lip-Ste-18}
Michael Lipnowski and Mark Stern.
\newblock Geometry of the smallest 1-form {L}aplacian eigenvalue on hyperbolic
  manifolds.
\newblock {\em Geom. Funct. Anal.}, 28(6):1717--1755, 2018.

\bibitem{Mon14}
Sugata Mondal.
\newblock Systole and {$\lambda_{2g-2}$} of closed hyperbolic surfaces of genus
  {$g$}.
\newblock {\em Enseign. Math.}, 60(1-2):3--24, 2014.

\bibitem{NWX20}
Xin {Nie}, Yunhui {Wu}, and Yuhao {Xue}.
\newblock {Large genus asymptotics for lengths of separating closed geodesics
  on random surfaces}.
\newblock {\em arXiv e-prints}, page arXiv:2009.07538, September 2020.

\bibitem{OR09}
Jean-Pierre Otal and Eulalio Rosas.
\newblock Pour toute surface hyperbolique de genre {$g,\ \lambda_{2g-2}>1/4$}.
\newblock {\em Duke Math. J.}, 150(1):101--115, 2009.

\bibitem{SWY80}
R.~Schoen, S.~Wolpert, and S.~T. Yau.
\newblock Geometric bounds on the low eigenvalues of a compact surface.
\newblock In {\em Geometry of the {L}aplace operator ({P}roc. {S}ympos. {P}ure
  {M}ath., {U}niv. {H}awaii, {H}onolulu, {H}awaii, 1979)}, Proc. Sympos. Pure
  Math., XXXVI, pages 279--285. Amer. Math. Soc., Providence, R.I., 1980.

\bibitem{Taylor-book}
Michael~E. Taylor.
\newblock {\em Partial differential equations {I}. {B}asic theory}, volume 115
  of {\em Applied Mathematical Sciences}.
\newblock Springer, New York, second edition, 2011.

\bibitem{WX18}
Yunhui {Wu} and Yuhao {Xue}.
\newblock {Small eigenvalues of closed Riemann surfaces for large genus}.
\newblock {\em arXiv e-prints}, page arXiv:1809.07449, Sep 2018.

\end{thebibliography}

\end{document}